\documentclass[hidelinks,onefignum,onetabnum,final]{siamart220329}
\usepackage{amsmath,amsfonts,amssymb}
\usepackage{latexsym}
\usepackage{color}
\usepackage{wrapfig}
\usepackage{mathtools} 

\let\eps\varepsilon
\newcommand{\R}{\mathbb R}
\newcommand{\bA}{\mathbf A}
\newcommand{\cA}{\mathcal A}
\newcommand{\hcA}{{\widehat{\mathcal A}}}

\newcommand{\bC}{\mathbf C}

\newcommand{\bS}{\mathbf S}

\newcommand{\bV}{\mathbf V}

\newcommand{\ba}{\mathbf a}

\newcommand{\blf}{\mathbf f}

\newcommand{\be}{\mathbf e}

\newcommand{\bu}{\mathbf u}

\newcommand{\bv}{\mathbf v}

\newcommand{\bx}{\mathbf x}
\newcommand{\by}{\mathbf y}
\newcommand{\bz}{\mathbf z}

\newcommand{\balpha}{\boldsymbol{\alpha}}
\newcommand{\hbalpha}{\widehat{\boldsymbol{\alpha}}}
\newcommand{\bsigma}{\boldsymbol{\sigma}}
\newcommand{\bbeta}{\boldsymbol{\beta}}

\newcommand{\tsigma}{\widetilde{\sigma}}
\newcommand{\halpha}{\widehat{\alpha}}
\newcommand{\bhalpha}{\widehat{\boldsymbol{\alpha}}}

\newcommand{\bphi}{\boldsymbol{\phi}}
\newcommand{\bseta}{\boldsymbol{\eta}}
\newcommand{\bpsi}{\boldsymbol{\psi}}

\newcommand{\bPhi}{\boldsymbol{\Phi}}
\newcommand{\bPsi}{\boldsymbol{\Psi}}
\newcommand{\bTheta}{\boldsymbol{\Theta}}

\newcommand{\tbPhi}{\widetilde{\bPhi}}
\newcommand{\tbPsi}{\widetilde{\bPsi}}
\newcommand{\tbTheta}{\widetilde{\bTheta}}

\newcommand{\rA}{\mathrm A}
\newcommand{\rB}{\mathrm B}
\newcommand{\rC}{\mathrm C}
\newcommand{\rG}{\mathrm G}

\newcommand{\rI}{\mathrm I}

\newcommand{\rP}{\mathrm P}
\newcommand{\rQ}{\mathrm Q}
\newcommand{\rR}{\mathrm R}
\newcommand{\rS}{\mathrm S}
\newcommand{\rU}{\mathrm U}
\newcommand{\rV}{\mathrm V}
\newcommand{\rW}{\mathrm W}
\newcommand{\rY}{\mathrm Y}
\newcommand{\rZ}{\mathrm Z}
\newcommand{\rF}{\mathrm F}

\newtheorem{remark}{Remark}

\textwidth   5.8in

\begin{document}
\title{Tensorial parametric  model order reduction of nonlinear dynamical systems}
\author{Alexander V. Mamonov\thanks{Department of Mathematics, University of Houston, 
Houston, Texas 77204 (avmamonov@uh.edu).} \and
Maxim A. Olshanskii\thanks{Department of Mathematics, University of Houston, 
Houston, Texas 77204 (maolshanskiy@uh.edu).}
}
\maketitle

\begin{abstract}
For a nonlinear dynamical system that depends on parameters, the paper introduces a novel tensorial reduced-order model (TROM). The reduced model is projection-based, and for systems with no parameters involved, it resembles proper orthogonal decomposition (POD) combined with the discrete empirical interpolation method (DEIM). For parametric systems, TROM employs low-rank tensor approximations in place of truncated SVD, a key dimension-reduction technique in POD with DEIM. Three popular low-rank tensor compression formats are considered for this purpose: canonical polyadic, Tucker, and tensor train. The use of multilinear algebra tools allows the incorporation of information about the parameter dependence of the system into the reduced model and leads to a POD-DEIM type ROM that (i) is parameter-specific (localized) and predicts the system dynamics for out-of-training set (unseen) parameter values, (ii) mitigates the adverse effects of high parameter space dimension, (iii) has online computational costs that depend only on tensor compression ranks but not on the full-order model size, and (iv) achieves lower reduced space dimensions compared to the conventional POD-DEIM ROM. The paper explains the method, analyzes its prediction power, and assesses its performance for two specific parameter-dependent nonlinear dynamical systems.
\end{abstract}

\begin{keywords}
Model order reduction, parametric dynamical systems, low-rank tensor approximations, 
proper orthogonal decomposition, discrete empirical interpolation method
\end{keywords}

\section{Introduction}

The numerical solution of parametric dynamical systems is a common problem in areas such as numerical optimal control, shape optimization, inverse modeling, and uncertainty quantification. 
If the system is described by a set of evolutionary nonlinear partial differential equations (PDEs), then a straightforward approach that involves repeatedly solving a fully resolved discrete model for various parameter values can result in overwhelming computational costs. Reduced-order models (ROMs) offer a possibility to alleviate these costs by replacing the fully resolved model, also referred to as the full-order model (FOM), with a low-dimensional surrogate model~\cite{antoulas2000survey,gugercin2004survey}.

Reduced-order modeling for parametric dynamical systems has already attracted considerable attention, as seen in works such as \cite{benner2015survey,hesthaven2016certified, bui2008model, baur2011interpolatory,
	benner2014robust,brunton2016discovering}. This paper contributes to the topic with a novel projection-based ROM that extends the ideas of proper orthogonal decomposition (POD) and discrete empirical interpolation method (DEIM) ROMs \cite{lumley1967structure,sirovich1987turbulence,chaturantabut2010nonlinear}  to parametric systems using concepts and tools from tensor algebra. We refer to this approach as 'tensorial reduced-order modeling' (TROM).

In general, a projection-based ROM constructs the surrogate model by projecting the high-fidelity FOM onto a low-dimensional problem-dependent vector space~\cite{benner2015survey}. This space is computed from information provided by FOM solutions sampled at specific time instances and parameter values, often referred to as solution snapshots. When dealing with multiple and varying parameters, it can be challenging, or even impossible, to build a universal low-dimensional space that adequately represents solutions for all times and parameters of interest while still achieving sufficient order reduction. TROM addresses this problem by using a 'low-rank tensor decomposition' (LRTD) in place of truncated singular value decomposition (SVD), a key dimension-reduction technique in both POD and DEIM. Similar to SVD, LRTD provides an orthogonal basis for a 'universal' reduced-order space that can be sufficiently large to approximate the space of all observed snapshots. However, tensor low-rank representations also preserve information about the parameter dependence in this universal space, something that SVD/POD fail to offer. For any incoming parameter value, not necessarily from the training set, TROM uses this additional information to find a 'parameter-specific' local reduced space, which is a low-dimensional subspace of the universal ROM space.

According to the outline above, dimension reduction in TROM is a two-stage process. In the first, 'offline' stage, two approximate LRTDs are computed: one for the tensor of solution snapshots and another for the tensor of snapshots of the nonlinear term of the dynamical system. The second LRTD is required for a hyper-reduction DEIM-type method. The system is then projected onto the universal space provided by the first LRTD, and DEIM is performed using the universal space of the second LRTD. The resulting significantly reduced but still relatively large projected system is then passed to the next stage along with certain information about both LRTDs needed to compute the local reduced bases. In the second, 'online' stage, for a given incoming vector of parameters, TROM computes bases for the local parameter-specific subspaces. These orthogonal bases are represented by their coordinates in the universal spaces. This allows for easy projection of the system onto the local reduced subspace and for performing a second (local) step of hyper-reduction. As demonstrated below, computation of local reduced bases and projection onto the local subspaces during the online stage involves operations only with low-dimensional matrices and vectors, making it fast. The distinguishing features of TROM are as follows: (i) it finds 'parameter-specific' (local) reduced spaces during the online stage; (ii) the additional online costs are small and depend only on tensor ranks, not on the FOM resolution; (iii) depending on the low-rank tensor compression format used, the adverse effects of parameter space dimension can be mitigated; and (iv) reduced space dimensions are lower compared to the traditional POD--DEIM ROM for the same or better accuracy.

The concept of tensorial ROM was introduced recently in~\cite{mamonov2022interpolatory}. That paper explained the computation of universal and local projection spaces for three popular low-rank tensor formats (canonical polyadic, Tucker, and tensor train) and applied the method to two parameterized linear systems. This paper extends TROM for reduced-order modeling of nonlinear systems, which requires the application of a hyper-reduction technique, and introduces the concept of a tensor two-stage DEIM. We show that finding a local parameter-dependent representation of the nonlinear terms can be done in two stages, including LRTD in the offline stage and low-dimensional computations in the online stage. We provide an interpolation estimate for tensorial DEIM in terms of tensor decomposition accuracy, interpolation bounds in the parameter domain, and singular values of some local matrices.

The literature on tensor methods in reduced-order modeling of dynamical systems is rather limited. In addition to~\cite{mamonov2022interpolatory}, we mention two papers \cite{nouy2015low,nouy2017low} that review tensor compressed formats and discuss their possible use for sparse function representation and reduced-order modeling, as well as a series of publications on the tensorization of algebraic systems resulting from the stochastic and parametric Galerkin finite element method, see, e.g.,~\cite{benner2015low,benner2016low,benner2017solving,lee2019low, kressner2011low}. In~\cite{kastian2020two}, a POD-ROM was combined with an LRTD of a mapping from parameter space onto an output domain. Different approaches to making projection-based ROMs parameter-specific can be found in \cite{eftang2010hp,eftang2011hp,amsallem2012nonlinear,amsallem2008interpolation,son2013real}.

The rest of the paper is organized into six sections. Section~\ref{s:setup} sets up the problem of interest. Section~\ref{s:pod} summarizes the standard POD-DEIM ROM. Section~\ref{s:prelim} introduces the necessary tensor algebra preliminaries and reviews the concept of tensor rank. Section~\ref{s:TROM} explains TROM. Section~\ref{s:analysis} addresses the analysis of TROM, including the representation capacity of local reduced spaces and the interpolation property of tensorial DEIM. Finally, Section~\ref{s:num} assesses the performance of TROM for two examples of parameterized dynamical systems and compares it to that of the standard POD--DEIM ROM.

\textbf{Notation conventions.}
The TROM and its analysis involve vectors, matrices, and tensors tailored to the full model, the reduced model, and some intermediate constructions. To assist the reader in navigating through the paper, we follow several notation conventions:we use lowercase letters for scalars,  bold lowercase letters for vectors, upright capital letters for matrices,  and all tensors will be denoted with bold uppercase letters (e.g. $v$ is a scalar, $\bv$ is a vector, $\rV$ is a matrix, and $\bV$ would be a tensor).
For dimensions associated with the full-order model, we use uppercase Latin letters like $N$, $M$, $K$, $K_1$, and so on. To denote low-rank approximations of full-order tensors and related quantities, we use the tilde symbol (~). For example, if $\mathbf{\Phi}$ is a full-order tensor, then $\widetilde{\mathbf{\Phi}}$ represents its low-rank approximation. We may also use $\widetilde{N}$, $\widetilde{M}$, or $\widetilde{R}_1$, and so forth to denote tensor ranks.
Vector spaces are denoted by uppercase Latin letters such as $V$, $U$, $Y$, and so on (Note: These should not be confused with upright capitals $\rV$, $\rU$, $\rY$ used for matrices).
We reserve the letter 'n' to represent the final reduced dimension of a ROM. When it's necessary to distinguish between the final reduced dimensions of the ROM projection and DEIM interpolation spaces, we use $n_{\Phi}$ and $n_{\Psi}$, respectively.

\section{Problem formulation}
\label{s:setup} 

The TROM framework we develop applies to steady-state and evolutionary systems arising from 
PDEs depending on parameters. We formulate the problem in the form of a general non-linear 
dynamical system. Specifically, for a given $\balpha = (\alpha_1,\dots,\alpha_D)$ from the parameter domain 
$\cA \subset \mathbb{R}^D$, find the trajectory $\bu = \bu(t, \balpha) : [0,T) \to \mathbb{R}^M$ solving
\begin{equation}
\label{eqn:GenericPDE}
\bu_t = \rA_{\balpha}\bu+ \blf_{\balpha}(t, \bu),  \quad t \in (0,T), \quad \text{and}~ \bu|_{t=0} = \bu_0,
\end{equation}
with parameter dependent matrix $\rA_{\balpha}\in\mathbb{R}^{M\times M}$, 
continuous flow field $\blf_{\balpha}:(0,T) \times \mathbb{R}^M \to \mathbb{R}^M$, and an initial condition $\bu_0$.
We assume that the unique solution exists on $(0,T)$ for all $\balpha\in\cA$. 

One can think about \eqref{eqn:GenericPDE} as a system of ODEs resulting from a spatial discretization of a (nonlinear) parabolic problem, where the coefficients, boundary conditions, or the computational domain (through a mapping into a reference domain) are parameterized by $\balpha$.

Our focus is on a projection-based ROM, where, for any given $\balpha\in\cA$, we seek an approximation to $\bu(\balpha)$ by solving a lower-dimensional system of equations obtained by projecting \eqref{eqn:GenericPDE} onto a reduced space, known as the ROM space. For effective order reduction, it is essential that the ROM space is both problem-dependent and specific to $\balpha$.

Among the projection-based approaches for model reduction in time-dependent differential equations, one of the most common techniques is proper orthogonal decomposition enhanced with the discrete empirical interpolation method (POD--DEIM) to handle nonlinear terms~\cite{kerschen2005method, rathinam2003new, liang2002properi, liang2002properii}. We provide an outline of the POD--DEIM ROM below for reference and for the purpose of comparison in Section~\ref{s:num}. Additionally, reviewing the POD--DEIM ROM is instructive since the Tensorial Reduced Order Modeling (TROM) can be viewed as a natural extension of POD--DEIM to parametric problems.

\section{Model reduction via POD--DEIM}
\label{s:pod}

Here we recap the conventional POD--DEIM ROM for non-linear systems
adapted for the parametric case. Consider a training set of $K$ parameters sampled from the parameter 
domain, ${\hcA} := \{\widehat\balpha_1, \dots, \widehat\balpha_K \} \subset \cA$. 
Hereafter we use hats to denote parameters from the training set $\hcA$.  
%
At the first, offline stage of POD--DEIM, one computes through FOM numerical simulations a collection 
of solution snapshots 
\begin{equation}
\bphi_j(\widehat\balpha_k) = \bu(t_j,\widehat\balpha_k) \in \mathbb{R}^M, 
\quad j = 1,\ldots, N, \quad k = 1,\ldots,K,
\end{equation}
and non-linear term snapshots  
\begin{equation}
\bpsi_j(\widehat\balpha_k) = \blf_{\widehat\balpha_k} (t_j, \bu(t_j, \widehat\balpha_k)) 
\in \mathbb{R}^M, \quad j = 1,\ldots, N, \quad k = 1,\ldots,K,
\end{equation}
further referred to as $\bu$- and $\blf$-snapshots, respectively, at times $0\le t_1,\dots,t_N\le T$, 
and for $\widehat\balpha_k$ from the training set ${\hcA}$. For a desired reduced space dimension 
$n \ll M$, one computes the reduced space basis $\{ \bu_i^{\rm pod}\}_{i=1}^{n} \subset \mathbb{R}^M$, 
referred to  as the \emph{POD basis}, such that the projection subspace 
$\mbox{span} \big\{ \bu_1^{\rm pod},\dots, \bu_n^{\rm pod}\big\}$ approximates 
the space spanned by all $\bu$-snapshots 
in the best possible way. 
This is achieved by assembling the matrix of all $\bu$-snapshots
\begin{equation}
\label{eqn:Phi}
\Phi_{\rm pod} = [\bphi_1(\balpha_1), \ldots, \bphi_N(\balpha_1), \ldots,
\bphi_1(\balpha_K), \ldots,\bphi_N(\balpha_K),] \in \R^{M \times N K}
\end{equation}
and computing its SVD 
\begin{equation}
	\label{eqn:SVDa}
\Phi_{\rm pod} = \rU \Sigma \rV^T.
\end{equation}
Then, the POD reduced basis vectors $\bu_i^{\rm pod}$, $i=1,\dots,n$, are taken to be the first $n$ 
left singular vectors of $\Phi_{\rm pod}$, i.e., the first $n$ columns of $\rU$ or in Matlab notation 
$\rU_{\rm pod}=\rU_{:, 1:n}$.

At the second, online stage, the POD--ROM solution $\bu^{\rm rom}$ is found through its vector of 
coordinates $\bbeta$ in the space $\mbox{range}(\rU_{\rm pod})$, 
i.e., $\bu^{\rm rom}=\rU_{\rm pod}\bbeta$, 
which solve the projected system
\begin{equation}
\label{eqn:genericROM}
\bbeta_t = \rU_{\rm pod}^T\rA_{\balpha}\rU_{\rm pod}\bbeta
+ \rU_{\rm pod}^T\blf_{\balpha}(t, \rU_{\rm pod} \bbeta),  \quad t \in (0,T), 
\mbox{~~and~~} \bbeta|_{t=0} = \rU_{\rm pod}^T\bu_0.
\end{equation}

While the pre-computation of the projected matrix $\mathbf{U}_{\text{pod}}^T\mathbf{A}_{\balpha}\mathbf{U}_{\text{pod}}$ during the offline stage enables fast matrix-vector multiplications for evaluating \eqref{eqn:genericROM}, efficiently evaluating the nonlinear term in \eqref{eqn:genericROM} during the online stage is generally challenging. The discrete empirical interpolation method (DEIM) addresses this challenge effectively. In this widely-used hyper-reduction technique, the nonlinear term is approximated within a lower-dimensional subspace of $\text{span}\{\mathbf{\psi}_j(\balpha_k)\}_{j=1,\ldots, N,k=1,\dots,K}$, the space spanned by  all FOM $\blf$-snapshots. SVD is employed to find the basis $\{\mathbf{y}_i^{\text{pod}}\}_{i=1}^{n} \subset \mathbb{R}^M$ for this subspace, where $\mathbf{y}_i^{\text{pod}}$ represents the $i$-th left singular vector of the matrix:
\begin{equation}
\label{eqn:Psi}
\Psi_{\rm pod} = [\bpsi_1(\balpha_1), \ldots, \bpsi_N(\balpha_1), \ldots,\bpsi_1(\balpha_K), 
\ldots,\bpsi_N(\balpha_K)] \in \R^{M\times NK},
 \end{equation}
comprising all $\blf$-snapshots. To ease notation, also take $n$ first left singular vectors of 
\eqref{eqn:Psi} to form $\rY_{\rm pod}=[ \by_1^{\rm pod},\dots, \by_{n}^{\rm pod}]$.
For stability and accuracy considerations, it's possible for $\mathbf{Y}_{\text{pod}}$ and $\mathbf{U}_{\text{pod}}$ to contain different numbers of vectors. In such cases, we refer to these dimensions as $n_{\Psi}$ and $n_{\Phi}$, respectively.
We refer to $\mbox{range}(\rY_{\rm pod})$ as the POD--DEIM reduced $\blf$-space and the columns of 
$\rY_{\rm pod}$ as the POD--DEIM $\blf$-reduced basis. Then, DEIM approximates the nonlinear term of 
\eqref{eqn:GenericPDE} via
\begin{equation}
\blf_{\balpha}(t,\bu)\approx \rY_{\rm pod} ( \rP^T  \rY_{\rm pod})^{-1} 
 \rP^T  \blf_{\balpha}(t,\bu),
\end{equation}  
where the 'selection' matrix is defined as:
\begin{equation}
	\mathbf{P} = \mathbf{P}(\bseta) := [\mathbf{e}_{\eta_1}, \ldots, \mathbf{e}_{\eta_n}] \in \mathbb{R}^{M \times n},
\end{equation}
This matrix, $\mathbf{P}$, is constructed so that for any $\blf\in\mathbb{R}^M$, the vector $\mathbf{P}^T \blf$ contains $n$ entries selected from $\blf$ with indices $\bseta = [\eta_1, \ldots, \eta_n]^T \in \mathbb{R}^n$. DEIM determines $\bseta$ entirely based on the information within $\mathbf{Y}_{\text{pod}}$ using a greedy algorithm, as detailed in~\cite{chaturantabut2010nonlinear}.
 
The singular values of $\Phi_{\rm pod}$ and $\Psi_{\rm pod}$  provide information 
about the representation power of $\mbox{span} \big\{ \bu_1^{\rm pod},\dots, \bu_n^{\rm pod}\big\}$ 
and $\mbox{span} \big\{ \by_1^{\rm pod},\dots, \by_n^{\rm pod}\big\}$, respectively. 
In particular, it holds: 
\begin{equation}
\label{eqn:phipsibound}
\mbox{\small $\displaystyle  \sum_{k=1}^{K}\sum_{i=1}^{N}$ } \bigg\| \bphi_i(\balpha_k) - 
\mbox{\small $\displaystyle  \sum_{j=1}^{n}$ } \left\langle \bphi_i(\balpha_k), \bu_j^{\rm pod} \right\rangle \bu_j^{\rm pod} 
\bigg\|^2_{\ell^2} \le 
\mbox{\small $\displaystyle  \sum_{j=n+1}^{NK} $ }
\sigma_i^2 (\Phi_{\rm pod}),
\end{equation}
for representation of the solution states, and similarly for $\by^{\rm pod}_j$ and $\blf$-snapshots 
$\bpsi_i(\balpha_k)$.

Summarizing, the POD--DEIM ROM of \eqref{eqn:GenericPDE} takes the form 
\begin{equation}
\bbeta_t = \rU_{\rm pod}^T\rA_{\balpha}\rU_{\rm pod}\bbeta + 
(\rU_{\rm pod}^T \rY_{\rm pod}) ( \rP^T  \rY_{\rm pod})^{-1}  \rP^T  
\blf_{\balpha}(t, \rU_{\rm pod}\bbeta),  \quad t \in (0,T), 
\end{equation}
with the initial condition
$\bbeta|_{t=0} = \rU_{\rm pod}^T\bu_0$, 
for $\bbeta(t):[0,T]\to \mathbb{R}^n$ so that $\bu(t)$ is approximated by 
$\bu^{\rm rom}(t):[0,T]\to \mbox{span} \big\{ \bu_1^{\rm pod},\dots, \bu_n^{\rm pod}\big\}$, 
where $\bu^{\rm rom}(t)=\rU_{\rm pod}\bbeta(t)$. 

A key requirement for the efficient evaluation of the nonlinear term is that for a fixed $t$ and 
$\bu\in\mathbb{R}^M$, an arbitrary given entry $f_i$ of vector $\blf_{\balpha}$ can be computed quickly, 
with costs independent of dimensions $M$ and $N$. To meet this requirement, we assume that each 
$f_i$ depends on a few entries of $\bu$, i.e.,
$
f_i(t,\bu) = f_i(t, u_{p_1(i)}, \ldots, u_{p_C(i)})
$, 
with $C$ independent on $M$ and $N$. 

Please note that the POD--DEIM reduced bases capture cumulative rather than localized information about the dependence of $\bu$- and $\blf$-snapshots on $\balpha$. Without this parameter-specificity, both bases may lack robustness for parameter values outside the training set, and this limitation can even apply to in-sample parameters if the reduced dimension is not sufficiently high. In other words, POD--DEIM bases might not perform well away from the reference FOM simulations. This poses a significant challenge when applying POD-based ROMs in tasks like inverse modeling. To address this challenge, we introduce tensor techniques with the aim to preserve the information  about parameter dependence in lower dimensional spaces and next to benefit from it at the online stage. 
We continue with  some preliminaries from multi-linear algebra.

\section{Multi-linear algebra preliminaries and tensor decompositions}
\label{s:prelim}

Assume that the parameter domain $\cA$ is the $D$-dimensional box 
\begin{equation}
	\cA = {\textstyle \bigotimes\limits_{i=1}^D} [\alpha_i^{\min}, \alpha_i^{\max}].
	\label{eqn:box}
\end{equation}  
Also, let the training set $\hcA$ be a Cartesian grid: distribute $K_i$ nodes $\{\halpha_i^j\}_{j=1,\dots,K_i}$ 
within each of the intervals $[\alpha_i^{\min}, \alpha_i^{\max}]$ in \eqref{eqn:box} for $i=1,\dots,D$, 
and let
\begin{equation}
\label{eqn:grid}
	\hcA = \left\{ \bhalpha =(\halpha_1,\dots,\halpha_D)^T\,:\,
	\halpha_i \in \{\halpha_i^j\}_{j=1,\dots,K_i}, ~ i = 1,\dots,D \right\}.
\end{equation} 
The cardinality of $\hcA$ is obviously $K = \prod_{i=1}^{D} K_i$.

Given the structure \eqref{eqn:grid} of the training set, the FOM solution $\bu$- and $\blf$-snapshots 
are naturally organized in the multi-dimensional arrays
\begin{equation}
(\bPhi)_{:,j_1,\dots,j_D,k} = \bphi_k(\halpha_1^{j_1},\dots,\halpha_D^{j_D}), \quad 
(\bPsi)_{:,j_1,\dots,j_D,k} = \bpsi_k(\halpha_1^{j_1},\dots,\halpha_D^{j_D}),
\label{eqn:snapmulti}
\end{equation}
which are tensors of order $D+2$ and size $M\times K_1\times\dots\times K_D\times N$, i.e.,
$j_i = 1,\ldots, K_i$, $i=1,\ldots,D$, $k=1,\ldots,N$.  We reserve the first and last indices of $\bPhi$, $\bPsi$ for dimensions corresponding to the spatial and temporal resolution, respectively.

Throughout the rest of this  section, $\bTheta$ is a generic tensor of the same order and size as the $\bu$- and $\blf$-snapshots tensors.
Unfolding of  $\bTheta$ is reordering  of its elements into a matrix. If all 1st-mode fibers of $\bTheta$, i.e. all vectors $(\bTheta)_{:,j_1,\dots,j_D,k}\in\R^M$,  are organized into columns of a  $M\times NK$ matrix, we get the \emph{1st-mode unfolding matrix}, denoted by $\Theta_{(1)}$. A particular ordering of the columns in   $\Theta_{(1)}$ is not important for the purposes of this paper. Thus, $\Phi_{\rm pod}$ and $\Psi_{\rm pod}$ are 1st-mode unfolding matrices of tensors 
$\bPhi$ and  $\bPsi$.
We seek to replace the (truncated) SVDs of $\Phi_{\rm pod}$ and $\Psi_{\text{deim}}$ with low-rank approximations of $\bPhi$ and $\bPsi$ directly in tensor format. 

Unlike the matrix case, the notion of tensor rank is ambiguous. The problem of defining a tensor rank(s) 
has been extensively addressed in the literature; see e.g.~\cite{hackbusch2012tensor}.  For our purpose we choose three tensor formats which lead to different definitions of a rank and offer three compressed
tensor representations or LRTD. These three formats: canonical polyadic (CP), Tucker (a.k.a high-order
singular value decomposition, HOSVD), and tensor train (TT), are recalled below.  

In the CP format ~\cite{hitchcock1927expression,carroll1970analysis,kiers2000towards,ReviewTensor}, 
one represents  a tensor $\bTheta$ by the sum of $R$  outer products of $D+2$ vectors 
$\bu^r\in\R^M$, $\bsigma^{r}_i \in \R^{K_i}$, $i=1,\dots,D$, and $\bv^r \in \R^N$,
\begin{equation}
	\label{eqn:CPv}
	\bTheta \approx \widetilde{\bTheta} = 
	\mbox{\small $\displaystyle \sum_{r=1}^{\widetilde R}$} \bu^r \circ \bsigma^r_1 \circ \dots \circ \bsigma^r_D \circ \bv^r.
\end{equation}
Here $\widetilde R$ is the so-called CP-rank and  an outer product of $d$ vectors $\ba^{(j)}\in\mathbb{R}^{N_j}$ is defined as an $N_1\times\dots\times N_d$ rank one tensor $\bA=\ba^{(1)}\circ \dots \circ\ba^{(d)}$ with entries $\bA_{i_1\dots i_d}=a^{(1)}_{i_1}\dots a^{(d)}_{i_d}$.

The HOSVD represents a tensor $\bTheta$ in the Tucker format~\cite{tucker1966some,ReviewTensor}:
\begin{equation}
	\label{eqn:TDv}
	\bTheta \approx \widetilde{\bTheta} = 
\mbox{\small $\displaystyle 
	\sum_{j = 1}^{\widetilde{M}}
	\sum_{q_1 = 1}^{\widetilde{K}_1}\dots
	\sum_{q_D = 1}^{\widetilde{K}_D}
	\sum_{k = 1}^{\widetilde{N}}$
}
	(\bC)_{j, q_1, \dots, q_D, k} \bu^j \circ \bsigma^{q_1}_1 \circ \dots \circ \bsigma^{q_D}_D \circ \bv^k,
\end{equation}
with a core tensor $\bC$ and vectors $\bu^j \in \R^M$, $\bsigma_i^{q_i} \in \R^{K_i}$, and $\bv^k \in \R^N$. 
The sizes of core tensor in all dimensions, i.e., 
$\widetilde{M}$, $\widetilde{K}_1$, $\ldots$, $\widetilde{K}_D$ and $\widetilde{N}$, 
are referred to as Tucker ranks of $\widetilde{\bTheta}$. 

Finally, the  tensor train  decomposition~\cite{TT1} represents a tensor  in the TT-format:
\begin{equation}
	\label{eqn:TTv}
	\bTheta \approx \widetilde{\bTheta} =
	\mbox{\small $\displaystyle \sum_{j_1=1}^{\widetilde R_1}\dots
	\sum_{j_{D+1}=1}^{\widetilde R_{D+1}}$}
	\bu^{j_1} \circ \bsigma^{j_1, j_2}_1 \circ \dots \circ \bsigma^{j_D, j_{D+1}}_D \circ \bv^{j_{D+1}},
\end{equation}
with $\bu^{j_1} \in \R^M$, $\bsigma^{j_i, j_{i+1}}_i \in \R^{K_i}$, and $\bv^{j_{D+1}} \in \R^N$,
where the positive integers $\widetilde R_i$ are referred to as the \emph{compression ranks} (or TT-ranks) 
of the decomposition. For higher order tensors the TT format is in general more efficient compared to HOSVD. 
This may be beneficial for larger $D$. 
Note that unlike CP or HOSVD formats, compression ranks of TT decomposition may depend on the order
in which the snapshots are organized in tensors $\bPhi$ and $\bPsi$. Throughout this paper we use the ordering
as in \eqref{eqn:snapmulti}. However, a different order may decrease the compression ranks further. 

All three decompositions can be viewed as extensions of SVD to multi-dimensional arrays but having different 
numerical and compression properties. In particular, finding the best approximation of 
tensor by a fixed-ranks tensor in Tucker and TT format is a well-posed problem with constructive algorithms 
known to deliver quasi-optimal solutions~\cite{de2000multilinear,TT1}. Furthermore, using these algorithms 
based on truncated SVD for a sequence of unfolding matrices, one may find $\widetilde{\bTheta}$ 
(in Tucker or TT format) that satisfies
\begin{equation}
	\label{eqn:TensApprox}
	\big\| \widetilde{\bTheta} - \bTheta \big\|_F \le {\eps}\big\|\bTheta \big\|_F
\end{equation}
for given ${\eps} > 0$. Corresponding Tucker or TT ranks are then recovered in the course of factorization. 
Here and further, $\|\bTheta \big\|_F$ denotes the tensor Frobenius norm, which is simply the  square root of the sum of the squares of all  entries of $\bTheta$.

The $k$-mode tensor-vector product $\bTheta \times_k \ba$ of a tensor 
$\bTheta \in \R^{N_1 \times \dots \times N_m}$ of order $m$ and a vector $\ba \in \mathbb{R}^{N_k}$
is a tensor of order $m-1$ and size 
$N_1 \times \dots \times N_{k-1} \times N_{k+1} \times \dots \times N_m$:
\begin{equation}
	(\bTheta \times_k \ba)_{j_1,\dots, j_{k-1},j_{k+1},\dots, j_{m}}=
	\mbox{\small $\displaystyle\sum_{j_k = 1}^{N_k}$} \bTheta_{j_1, \dots, j_{m}} a_{j_k}.
	\label{eqn:kmodeprod}
\end{equation}
Analogously, the $k$-mode tensor-matrix product $\bTheta \times_k \rA$ of a tensor 
$\bTheta \in \R^{N_1 \times \dots \times N_m}$ and a matrix $\rA \in \mathbb{R}^{J \times N_k}$
is a tensor of order $m$ and size $N_1 \times \dots \times N_{k-1}, J, N_{k+1} \times \dots \times N_m$:
\begin{equation}
	(\bTheta \times_k \rA)_{j_1,\dots, j_{k-1},i,j_{k+1},\dots, j_{m}}=
	\mbox{\small $\displaystyle\sum_{j_k = 1}^{N_k}$} \bTheta_{j_1, \dots, j_{m}} a_{ij_k}.
	\label{eqn:kmodeprodA}
\end{equation}

In what follows we assume that $\widetilde\bPhi$ and  $\widetilde\bPsi$ are compressed representations 
of $\bu$- and $\blf$-snapshot tensors $\bPhi$ and $\bPsi$, respectively, in one of the formats discussed above. 
To minimize notation burden, we assume they satisfy \eqref{eqn:TensApprox} with some $\eps$ which is the 
same for both $\bPhi$ and $\bPsi$ compression.  
\newpage

\section{Tensorial ROM for nonlinear dynamical systems} 
\label{s:TROM} 
\subsection{Universal and local reduced spaces}
Each of the two stages of the TROM algorithm is associated with distinct reduced spaces. The reduced spaces computed during the offline stage are referred to as the 'universal' reduced spaces, while the 'local' reduced spaces are employed in the online stage. We will describe both types of reduced spaces below.

\emph{Universal reduced spaces are the spans of all 1st-mode fibers of the compressed tensors}, i.e.,
\begin{equation}
\widetilde{U} = \mbox{range}\big(\widetilde\Phi_{(1)}\big)~~ \mbox{ and } ~~
\widetilde{Y} = \mbox{range}\big(\widetilde\Psi_{(1)}\big).
\end{equation}
The dimension of $\widetilde{U}$ is equal to the first Tucker or TT rank of $\widetilde\bPhi$ 
(if Tucker or TT formats are used) and it does not exceed $R$ for the 
CP compression format. We also denote by $\rU$ and $\rY$ the matrices with columns that form 
orthonormal bases for $\widetilde{U}$ and $\widetilde{Y}$, respectively. 

The universal spaces represent all observed $\bu$- and $\blf$-snapshots up to the tensor compression accuracy. 
Indeed, let $\rU=[\bu_1,\dots,\bu_{\widetilde N}]$, where  $\widetilde N=\mbox{dim}(\widetilde{U})$, then it holds
{\small
\begin{equation}
	\label{eqn:tphibound}
\begin{split}
	\mbox{\small $\displaystyle \sum_{k=1}^{K}\sum_{i=1}^{N}$}\left\| \bphi_i(\balpha_k) - 
	\mbox{\small $\displaystyle \sum_{j=1}^{\widetilde N}$}\langle \bphi_i(\balpha_k), \bu_j \rangle \bu_j \right\|^2_{\ell^2} 
	& = \left\| (\rI - {\rU}{\rU}^T)\times_1{\bPhi} \right\|^2_F 
	= \left\| (\rI - {\rU}{\rU}^T)\times_1\left( {\bPhi} - \widetilde{\bPhi} \right) \right\|^2_F \\
	& \le \left\| \rI - {\rU}{\rU}^T \right\|^2 
	\left\| {\bPhi} - \widetilde{\bPhi}  \right\|^2_F \le 	\left\| {\bPhi} - \widetilde{\bPhi}  \right\|^2_F 
	 ~\le~ {\eps}^2\big\|\bPhi \big\|_F^2.
\end{split}
\end{equation}
}

\noindent where we used  
$\| \rA\times_k \mathbf{B} \|_F = \| \rA\rB_{(k)} \|_F \le \| \rA \| \|  \rB_{(k)} \|_F=\| \rA \| \|  \mathbf{B} \|_F$ 
for a matrix $\rA$ and a tensor $\mathbf{B}$ of compatible sizes, and spectral matrix norm $\| \cdot \|$. 
We also used $\| \rP \| = 1$ for the orthogonal projection matrix $\rP = \rI - {\rU} {\rU}^T$. 
 
The bound  \eqref{eqn:tphibound} and a similar bound for $\blf$-snapshots resembles  the POD optimal 
representation property \eqref{eqn:phipsibound}. Universal spaces $\widetilde{U}$ and $\widetilde{Y}$ 
can be seen as TROM counterparts of POD--DEIM ROM spaces  $U_{\rm pod}$ and $Y_{\rm pod}$ 
In fact, if SVD-based algorithms from~\cite{de2000multilinear,TT1} are applied to find $\bPsi$ and $\bPhi$ 
in Tucker or TT-formats, then it  holds $\widetilde{U}=U_{\rm pod}$ and $\widetilde{Y}=Y_{\rm pod}$ if 
$\widetilde N=n_{\rm pod}$ and the same training set is used for both POD and TROM. 
The advantage of LRTD over POD is that $\widetilde{\bPhi}$ and $\widetilde{\bPsi}$ contain information about variation 
of $\bu$- and $\blf$-snapshots with respect to $\balpha$. This additional information enables us to find the 
subspaces of $\widetilde{U}$ and $\widetilde{Y}$ referred to as local reduced spaces that are best suitable 
for the representation of $\bu(t,\balpha)$ and $\blf_{\balpha}(t,\bu(t,\balpha))$ for any specific $\balpha\in\cA$.
These local reduced spaces are then used for online TROM's stage. 
Their dimensions can be (much) lower than the dimensions of $\widetilde{U}$ and $\widetilde{Y}$, 
thus the universal spaces can be allowed to be sufficiently large (by choosing $\eps$ small enough) 
to accurately represent all $\bu$- and $\blf$-snapshots without any  decrease of ROM performance
at the online stage.

To define parameter-specific local reduced spaces for an arbitrary $\balpha =(\alpha_1,\dots,\alpha_D)^T\in \cA$, 
we need an interpolation procedure in parameter domain
\begin{equation}
	\label{eqn:bea}
	\be^i \,:\, \balpha \to \mathbb{R}^{K_i},\quad i=1,\dots,D.
\end{equation}
such that for a smooth function 
${g} : [\alpha_i^{\min}, \alpha_i^{\max}] \to \R$ one approximates
	${g}(\alpha_i) \approx 	\sum\limits_{j=1}^{K_i} e_{j}^i (\balpha) {g}(\widehat{\alpha}_i^j)$,
where $\be^i(\balpha) = \big(e_{ 1}^i (\balpha), \ldots, e_{ K_i}^i (\balpha) \big)^T$, and  $\widehat{\alpha}_i^j$, $j=1,\ldots,K_i$, are the grid nodes on $[\alpha_i^{\min}, \alpha_i^{\max}]$.
%
In this paper we consider \eqref{eqn:bea} corresponding to Lagrange 
interpolation of {order $p-1$}: for a given $\balpha \in \cA$ let 
$\widehat{\alpha}_i^{i_1}, \ldots, \widehat{\alpha}_i^{i_p}$ be the $p$ closest grid nodes to $\alpha_i$ 
in $[\alpha_i^{\min}, \alpha_i^{\max}]$, for $i=1,\ldots,D$, then
\begin{equation}
	\label{eqn:lagrange}
	e_{j}^i (\balpha) = 
	\begin{cases} 
		\prod\limits_{\substack{m = 1, \\ m \neq k}}^{p}(\widehat{\alpha}_i^{i_m}-\alpha_i) \Big/ 
		\prod\limits_{\substack{m = 1, \\ m \neq k}}^{p}(\widehat{\alpha}_i^{i_m}-\widehat{\alpha}_i^j), 
		& \text{if } j = i_k \in \{i_1,\ldots,i_p\}, \\
		\qquad\qquad\qquad\qquad\qquad\qquad\qquad 0, & \text{otherwise}, \end{cases}
\end{equation}
Lagrange interpolation is not the only possible option, of course. 

With the help of \eqref{eqn:bea} we introduce the local snapshot matrices 
$\widetilde{\Phi} (\balpha)$ and $\widetilde{\Psi} (\balpha)$ via the following ``extraction--interpolation'' procedure:
\begin{equation}
	\label{eqn:extractbt}
	\begin{split}
	\widetilde{\Phi} (\balpha) &= \widetilde{\bPhi} 
	\times_2 \be^1(\balpha) \times_3 \be^2(\balpha) \dots \times_{D+1} \be^D(\balpha) 
	\in \R^{M \times N},\\
	\widetilde{\Psi} (\balpha) &= \widetilde{\bPsi} 
\times_2 \be^1(\balpha) \times_3 \be^2(\balpha) \dots \times_{D+1} \be^D(\balpha) 
\in \R^{M \times N}.
	\end{split}
\end{equation}
If $\balpha = \bhalpha \in \hcA$ 
is a parameter from the training set, then $\be^i(\hbalpha)$ encodes the position of $\widehat{\alpha}_i$ 
among the grid nodes on $[\alpha^{\min}_i, \alpha^{\max}_i]$. Therefore, for $\eps=0$ the  
matrices $\widetilde{\Phi} (\hbalpha)$, $\widetilde{\Psi} (\hbalpha)$ are exactly the matrices of all  
$\bu$- and $\blf$-snapshots for the particular $\hbalpha$ (``extraction''). Otherwise, for a general $\balpha\in \cA$  matrices
$\widetilde{\Phi} (\balpha)$, $\widetilde{\Psi} (\balpha)$ are the result of interpolation between pre-computed snapshots.

Finally, we have all the required pieces to define the local  spaces. For arbitrary given $\balpha \in \cA$ 
the parameter-specific \emph{local reduced $\bu$-space of dimension $n$ is the space spanned by the first 
$n$ left singular vectors of $\widetilde{\Phi} (\balpha)$}, where $ n \le \mbox{rank}(\widetilde{\Phi} (\balpha)) $.
Similarly, the parameter-specific local reduced $\blf$-space of dimension $n$ is  spanned by 
the first $n$ left singular vectors of $\widetilde{\Psi} (\balpha)$,
$n \le \mbox{rank}(\widetilde{\Psi} (\balpha))$. It is quite remarkable that orthogonal bases for each of 
these local spaces can be calculated quickly (i.e., using only low-dimensional calculations) through their 
coordinates in the corresponding universal spaces \emph{without assembling} 
$\widetilde{\Phi} (\balpha)$ or $\widetilde{\Psi} (\balpha)$ explicitly. TROM framework performs this ``on the fly'' during the online stage for any incoming $\balpha$,
as explained  later.

\smallskip
In summary, the universal spaces, within a compression accuracy of $\epsilon$, encompass the spaces 
\begin{wrapfigure}{r}{0.45\textwidth}
	\vskip-1ex 		
	\includegraphics[width=0.45\textwidth]{./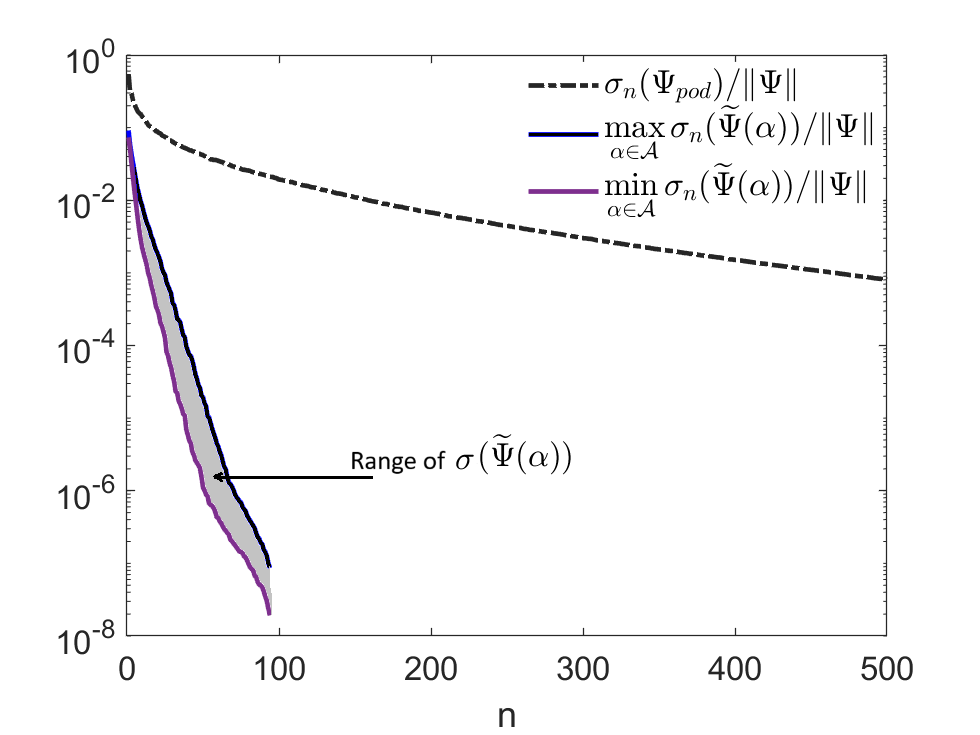}
	\vskip-2ex 		
	\caption{\label{fig1} Singular values $\sigma_n({\Psi}_{\rm pod})$ and $\sigma_n(\widetilde{\Psi}(\balpha))$ 
		for an example of the parametrized Allen-Cahn equations from Section~\ref{sec:AC}.}
\end{wrapfigure}
formed by all observed snapshots. These spaces resemble those utilized by the conventional POD-DEIM ROM, where the SVD of matrices containing all $\bu$- and $\blf$-snapshots is truncated based on the desired dimension or approximation accuracy. In the TROM framework, this truncated SVD is substituted with LRTD in one of the formats \eqref{eqn:CPv}--\eqref{eqn:TTv}.
Beyond the universal reduced spaces, the two-stage TROM framework also takes advantage of $\balpha$-specific 'subspaces' within the universal spaces, known as the local reduced spaces.
We illustrate the usefulness of local reduced spaces through a numerical example of the parametrized Allen-Cahn equations (for a complete problem description, see Section~\ref{s:num}) in Figure~\ref{fig1}. This figure compares the singular values of $\Psi_{\text{pod}} = \Psi_{(1)}$ with the singular values of $\widetilde{\Psi}(\balpha)$ for a random sampling of $\balpha \in \cA$. Truncation by $\sigma_n({\Psi}_{\text{pod}})$ and $\sigma_n(\widetilde{\Psi}(\balpha))$ determines the representation power of the standard POD-DEIM ROM interpolation space and the TROM local space of a fixed dimension $n$. We observe a significant enhancement in the representation power of TROM local spaces compared to POD-DEIM ROM spaces, demonstrating the benefits of taking into account information on $\balpha$-dependence during the offline decomposition stage.

\subsection{Two-stage TROM-DEIM process} 

With all the preliminaries in order, we are now ready to introduce the two-stage process for computing
tensorial reduced order model that incorporates DEIM to handle nonlinearity. 
One may formulate at least three variants of this process, each corresponding to the considered LRTD format. However, the main steps  remain the same for all three
variants. Therefore, we will present the TT version of the TROM-DEIM process, with additional comments regarding the necessary adjustments for other LRTD formats provided at the end of this section.

\textbf{Offline stage}. The first calculation is to compute TT decomposition to both
$\bu$-snapshot tensor $\bPhi$ and $\blf$-snapshot tensor $\bPsi$, satisfying \eqref{eqn:TensApprox}:
\begin{equation}\label{TTl}
\begin{split}
	\bPhi\approx \widetilde{\bPhi} &=
		\mbox{\small $\displaystyle \sum_{j_1=1}^{\widetilde R_1^\Phi}\dots
	\sum_{j_{D+1}=1}^{\widetilde R^\Phi_{D+1}}$}
	\bu^{j_1} \circ \bsigma^{j_1, j_2}_{1,\Phi} \circ \dots \circ \bsigma^{j_D, j_{D+1}}_{D,\Phi} \circ \bv^{j_{D+1}},\\
	\bPsi\approx \widetilde{\bPsi} &=
		\mbox{\small $\displaystyle \sum_{j_1=1}^{\widetilde R_1^\Psi}\dots
	\sum_{j_{D+1}=1}^{\widetilde R_{D+1}^\Psi}$}
	\by^{j_1} \circ \bsigma^{j_1, j_2}_{1,\Psi} \circ \dots \circ \bsigma^{j_D, j_{D+1}}_{D,\Psi} \circ \bz^{j_{D+1}},
\end{split}
\end{equation}
with $\bu^{j_1},\by^{j_1} \in \R^M$, $\bsigma^{j_i, j_{i+1}}_{i,\Phi},\bsigma^{j_i, j_{i+1}}_{i,\Psi} \in \R^{K_i}$, and $\bv^{j_{D+1}},\bz^{j_{D+1}} \in \R^N$.
TT decompositions $\widetilde{\bPhi}$, $\widetilde{\bPsi}$ may be found by using a stable algorithm  based on truncated SVD for a sequence of unfolding matrices~\cite{TT1,TT2} with computational costs similar to finding 
${\Phi}_{\rm pod}$ and ${\Psi}_{\rm pod}$ for the conventional POD--DEIM ROM.
 We organize  vectors from \eqref{TTl} into matrices 
\begin{align}
\label{eqn:ttuv}
\rU & = [\bu^1, \dots, \bu^{\widetilde{R}_1^\Phi}] \in \R^{M \times \widetilde{R}_1^\Phi}, \quad 
\rV = [\bv^1, \dots, \bv^{\widetilde{R}_{D+1}^\Phi}] \in \R^{N \times \widetilde{R}_{D+1}^\Phi}, \\
\label{eqn:ttyz}
\rY & = [\by^1, \dots, \by^{\widetilde{R}_1^\Psi}] \in \R^{M \times \widetilde{R}_1^\Psi}, \quad 
\rZ = [\bz^1, \dots, \bz^{\widetilde{R}_{D+1}^\Psi}] \in \R^{N \times \widetilde{R}_{D+1}^\Psi},
\end{align}
where $\widetilde{R}_1^\Theta$ and $\widetilde{R}_{D+1}^\Theta$ are the first and
last TT ranks of both snapshot tensors $\tbTheta \in \{ \tbPhi, \tbPsi \}$, respectively.
We also consider third order tensors 
$\bS_i^{\Theta} \in \R^{\widetilde{R}_i^\Theta \times K_i \times \widetilde{R}_{i+1}^\Theta}$, 
defined entry-wise as
\begin{equation}
	\left[ \bS_i^{\Theta} \right]_{j k q} = \left[ \bsigma^{j q}_{i,\Theta} \right]_k, \quad
	j = 1, \ldots, \widetilde{R}_i^\Theta, \quad
	k = 1, \ldots, K_i, \quad
	q = 1, \ldots, \widetilde{R}_{i+1}^\Theta,
	\label{eqn:ttsi}
\end{equation}
for all $i = 1, \ldots, D$, for both snapshot tensors $\tbTheta \in \{ \tbPhi, \tbPsi \}$, respectively.
While both $\rU$ and $\rY$ are orthogonal matrices, the columns of $\rV$ and $\rZ$ are orthogonal, 
but not necessarily orthonormal. Thus, we need diagonal scaling matrices
\[
\rW^{\Phi}  = \mbox{diag} \left( \| \bv^1 \|, \ldots, \| \bv^{\widetilde{R}_{D+1}^\Phi} \| \right), \quad 
\rW^{\Psi}  = \mbox{diag} \left( \| \bz^1 \|, \ldots, \| \bz^{\widetilde{R}_{D+1}^\Psi} \| \right) .
\]
Then, the essential information about the compressed TT representations 
$\widetilde{\bTheta} \in \{ \widetilde{\bPhi},  \widetilde{\bPsi} \}$ that is transmitted to the online phase 
is assembled into
\begin{equation}
\label{eqn:coreTT}
\mbox{core}(\tbTheta) = 
\left\{\bS_i^{\Theta} \in \R^{\widetilde{R}_i^\Theta \times K_i \times \widetilde{R}_{i+1}^\Theta}
,~{\small i=1,\dots,D},~ 
\rW^{\Theta} \in \R^{\widetilde{R}_{D+1}^\Theta \times \widetilde{R}_{D+1}^\Theta} \right\}.
\end{equation}

To perform hyper-reduction, DEIM algorithm is applied to the orthonormal columns 
of $\rY$ to compute the indices $\bseta$ of the selection matrix 
$ \rP  = [\be_{\eta_1}, \ldots, \be_{\eta_{\widetilde{R}_1^\Psi}}] 
\in \mathbb{R}^{M \times \widetilde{R}_1^\Psi}$. 
Then, the matrices
\begin{equation}
\rU^T \rY \in \mathbb{R}^{\widetilde{R}_1^\Phi \times \widetilde{R}_1^\Psi} \quad\mbox{and}\quad 
 \rP ^T \rY \in \mathbb{R}^{\widetilde{R}_1^\Psi \times \widetilde{R}_1^\Psi}
\label{eqn:utypty}
\end{equation}
are computed and passed onto the online stage along with the TT cores \eqref{eqn:coreTT}.

If one or both terms $\rA_{\balpha}$ and $\blf_{\balpha}$ in \eqref{eqn:GenericPDE} do not contain
a dependence on the parameter, they can be projected for later use to save computation at 
the online stage:
\[
\widetilde{\rA}  = \rU^T \rA \rU, \quad
\widetilde{\blf}(\cdot)  =  \rP ^T \blf(\rU \cdot).
\]
Analogously, the projections can be pre-computed offline if the dependence on parameters is explicit of the form
$\rA_{\balpha}=\sum_i g_i(\balpha)\rA_i$ with some given functions $g_i:\cA\to\mathbb{R}$ and parameter-free matrices $\rA_i$ and similarly for $\blf_{\balpha}$.

\smallskip
\textbf{Online stage}. The second stage of the TROM process is specific for a particular
incoming value of $\balpha \in \cA$. In particular, it computes the two local reduced bases via 
their coordinates in the universal reduced bases, the columns of $\rU$
and $\rY$, respectively. To achieve this, use the cores \eqref{eqn:coreTT} to define the 
parameter-specific \emph{core matrices} 
$\rC^{\Theta} (\balpha) \in \R^{\widetilde{R}_1^\Theta \times \widetilde{R}_{D+1}^\Theta}$ 
as the product
\begin{equation}
	\label{eqn:C_TT}
	\rC^{\Theta} (\balpha) = \mbox{\small $\displaystyle \prod_{i=1}^{D}$} \left( \bS_i^{\Theta} \times_2 \be^i (\balpha) \right),
\end{equation}
for $\tbTheta \in \{ \tbPhi, \tbPsi \}$. After rescaling with $\rW^{\Theta}$, take the SVD of the rescaled 
core matrices 
\begin{equation}
\rC^{\Phi} (\balpha) \rW^{\Phi} = \rU_c \Sigma_c^{\Phi} \rV_c^T, \quad
\rC^{\Psi} (\balpha) \rW^{\Psi} = \rY_c \Sigma_c^{\Psi} \rZ_c^T,
\label{eqn:coresvdtt}
\end{equation}
which is computationally cheap since $\rC$'s and $\rW$'s have reduced dimensions. 
This allows one to obtain the SVD of the local snapshot matrices from \eqref{eqn:extractbt} without explicitly assembling them. To see this, note the identities 
\begin{align}
\label{eqn:PhieTT}
\widetilde{\Phi} (\balpha) & = 
\rU \rC^{\Phi} (\balpha) \rW^{\Phi} \left( \rW^{\Phi} \right)^{-1} \rV^T = 
\left({\rU} \rU_c \right) \Sigma_c^{\Phi} \left({\rV} \left( \rW^{\Phi} \right)^{-1} \rV_c \right)^T, 
\end{align}
and similar for $\widetilde{\Psi} (\balpha)$.
Since all matrices $\rU$, $\rU_c$, ${\rV} ( \rW^{\Phi})^{-1}$, $\rV_c$,  have orthonormal columns,  the right-hand sides of \eqref{eqn:PhieTT} are the (thin) SVDs of 
$\widetilde{\Phi} (\balpha)$ and $\widetilde{\Psi} (\balpha)$, respectively.
Therefore, the coordinates of the local reduced $\bu$-basis in the universal space $\widetilde{U}$ 
are given by the first $n$ columns of $\rU_c$. Similarly, the coordinates of the local reduced $\blf$-basis 
in the universal space $\widetilde{Y}$ are the first $n$ columns of $\rY_c$. We denote $\rU_n = [\rU_c]_{:, 1:n} \in \mathbb{R}^{\widetilde{R}_1^\Phi \times n}$ and 
$\rY_n = [\rY_c]_{:, 1:n} \in \mathbb{R}^{\widetilde{R}_1^\Psi \times n}$. Thus the local projection and interpolation spaces are $\mbox{range}(\rU\rU_n)$ and $\mbox{range}(\rY\rY_n)$.

After computing the coordinates of local reduced bases, an additional hyper-reduction step is performed.
Below we introduce two variants of this \emph{local} hyper-reduction procedure: (i)  local DEIM and (ii) local least squares fitting. 

\smallskip
\noindent{\it (i) Local DEIM} is done by applying DEIM algorithm to $\left( \rP^T \rY \right) \rY_n$ to obtain the indices  $\mathbf{\xi} = [\xi_1,\ldots,\xi_n]$ and the corresponding selection matrix
$\rP_n=\rP_n(\mathbf{\xi}) \in \mathbb{R}^{\widetilde{R}_1^\Psi \times n}$. 
Then, the non-linear term can be found using the matrices \eqref{eqn:utypty} precomputed
at the offline stage:
\begin{equation}
\blf_n (\cdot) = \rU_n^T \left( \rU^T \rY \right) \rY_n 
\left( \rP^T_{n} \left( \rP^T \rY \right) \rY_n \right)^{-1} 
\rP^T_{n} \widetilde{\blf}(\rU_n \cdot),
\label{eqn:fn}
\end{equation}

\noindent{\it (ii) Local LS fitting.}  Thanks to reduced dimensions of the universal and local space, it is computationally  inexpensive to solve the fitting problem of finding $\bx\in\R^n$ such that $\left( \rP^T \rY \right) \rY_n \bx \approx \widetilde{\blf}(\rU_n \cdot)$  in the least square sense. 
Then, the non-linear term representation takes the form
\begin{equation}
	\blf_n (\cdot) = \rU_n^T \left( \rU^T \rY \right) \rY_n 
	\left(  \left( \rP^T \rY \right) \rY_n \right)^{\dagger} \widetilde{\blf}(\rU_n \cdot),
	\label{eqn:fn_ls}
\end{equation}
where $((\rP^T \rY ) \rY_n )^{\dagger}$ denotes the pseudo-inverse of 
 $\widetilde{R}_1^\Psi \times n$-matrix  $( \rP^T \rY ) \rY_n$.

Along with either variant of local hyper-reduction,  the pre-projected matrix $\widetilde{A}$ is projected further to obtain
\begin{equation} \label{eqA}
\rA_n = \rU_n^T \widetilde{\rA} \rU_n.
\end{equation}

Finally, the TROM of \eqref{eqn:GenericPDE} takes the form: Find $\bbeta(t):[0,T]\to \mathbb{R}^n$ solving
\begin{equation} \label{ROMeq}
\bbeta_t = \rA_n \bbeta + \blf_n (\bbeta),  \quad t \in (0,T), 
\end{equation}
with the initial condition
$\bbeta|_{t=0} = \rU_n^T \rU^T \bu_0$,
so that $\bu(t)$ is approximated by 
$\bu^{\rm trom}(t):[0,T] \to \R^M$, 
where $\bu^{\rm trom}(t)=\rU \rU_n \bbeta(t)$. 

We summarize both stages described above in the Algorithm~\ref{Alg1}. In practice, one may pick 
different reduced dimensions $n$ for the local reduced $\bu$- and $\blf$-spaces. When we need to distinguish 
them, we use $n_{\Phi}$ and $n_{\Psi}$, respectively.

\begin{algorithm}\caption{TROM-DEIM} \label{Alg1}
	\label{alg:TROM-DEIM}
	\begin{itemize}
		\item \textbf{Offline stage}.\\
		\emph{Input:} snapshot tensors $\bPhi \in \R^{M \times K_1 \times \ldots \times K_D \times N}$ and
		$\bPsi \in \R^{M \times K_1 \times \ldots \times K_D \times N}$ and
		target accuracy $\eps$;\\
		\emph{Output:} Compression ranks, decomposition cores \eqref{eqn:coreTT}, 
		matrices \eqref{eqn:utypty}; \\
		\emph{Compute:} 
		\begin{enumerate}
			\item Use algorithm from~\cite{TT1} with prescribed accuracy $\eps$ to compute  
			decomposition \eqref{eqn:TTv} for both $\bTheta \in \{ \bPhi, \bPsi\}$; assemble 
			the cores \eqref{eqn:coreTT} and the matrices $\rU$ and $\rY$ as in 
			\eqref{eqn:ttuv}--\eqref{eqn:ttyz};
			\item Apply DEIM to $\rY$ to find the selection matrix $\rP$; 
			\item Compute the matrices $\rU^T \rY$ and $ \rP ^T \rY$;
	\end{enumerate}
		\item \textbf{Online stage}. \\
		\emph{Input:} decomposition cores \eqref{eqn:coreTT}, local reduced space dimensions 
		\begin{equation} 
		n_{\Phi} \le \min\{\widetilde{R}_1^\Phi, \widetilde{R}_{D+1}^\Phi\},\quad
		n_{\Psi} \le \min\{  \widetilde{R}_1^\Psi, \widetilde{R}_{D+1}^\Psi \},                
		\label{eqn:nphipsi}
		\end{equation} 
		and an incoming parameter vector $\balpha \in \cA$; \\
		\emph{Output:} Coordinates of the local reduced bases in the form of matrices 
		$\rU_{n}$ and $\rY_{n}$;\\
		\emph{Compute:}
		\begin{enumerate}
			\item Use tensors $\bS_i^{\Theta}$, $i=1,\ldots,D$, to assemble the core matrices
			$\rC^{\Theta} (\balpha) \in \R^{\widetilde{R}_1^\Theta \times \widetilde{R}_{D+1}^\Theta}$ 
			as in \eqref{eqn:C_TT} for both $\Theta \in \{ \Phi, \Psi\}$;
			\item Compute the SVD \eqref{eqn:coresvdtt} of both scaled core matrices 
			$\rC^{\Phi} (\balpha) \rW^{\Phi}$ and $\rC^{\Psi} (\balpha) \rW^{\Psi}$ 
			to find the matrices of left singular vectors $\rU_c$ and $\rY_c$, respectively;
			\item Set $\rU_n = [\rU_c]_{:, 1:n_{\Phi}}$ and $\rY_n = [\rY_c]_{:, 1:n_{\Psi}}$.
			
		\end{enumerate}
	\end{itemize}
\end{algorithm}

After the model reduction is done according to Algorithm~\ref{Alg1}, the projected problem 
\eqref{ROMeq} is solved with either \eqref{eqn:fn}, \eqref{eqA} or 
\eqref{eqn:fn_ls}, \eqref{eqA}. One lets $\bu^{\rm trom}(t)=\rU \rU_n \bbeta(t)$. 
For another incoming vector of parameters $\balpha$, the online part of Algorithm~\ref{Alg1} 
should be recomputed. 

\begin{remark}[Computational complexity]Let us introduce 
$\widetilde{R} = 
\max\{ \widetilde{R}_1^\Phi, \widetilde{R}_{D+1}^\Phi,  \widetilde{R}_1^\Psi, \widetilde{R}_{D+1}^\Psi \}$
and $n = \max\{ n_\Phi, n_\Psi \}$. Then, the bulk of the computational cost of the online stage of TROM-DEIM is in the 
SVD computation \eqref{eqn:coresvdtt} which takes $O(\widetilde{R}^3)$. This is followed by integration of
\eqref{ROMeq} costing $O(n^2 N)$. This compares favorably to POD-ROM integration cost of  $O(\widetilde{R}^2 N)$,
provided $\widetilde{R} < N$ which is always the case for reasonably long integration times. Then, the speed up of 
TROM--DEIM relative to POD--ROM is determined by the ratio of dimensions of local and universal spaces.
\end{remark}

We conclude with a brief discussion of variants of Algorithm~\ref{Alg1}
based on the other two LRTD formats by pointing out the required modifications. 

\textbf{HOSVD-TROM}. For Step 1 of the offline stage we employ HOSVD-LRTD \eqref{eqn:TDv}
instead of TT-LRTD \eqref{eqn:TTv}, so the cores \eqref{eqn:coreTT} are replaced with
\begin{equation}
\label{eqn:coreHOSVD}
\mbox{core}(\widetilde{\bTheta}) = \left\{
\bC^\Theta \in 
\R^{\widetilde{M}^\Theta \times \widetilde{K}_1^\Theta \times \dots \times \widetilde{K}_D^\Theta 
\times \widetilde{N}^\Theta},~
\rS_i^\Theta \in \mathbb{R}^{n_i \times \widetilde{K}_i^\Theta},~{\small i=1,\dots,D} \right\},
\quad \widetilde{\bTheta} \in \{ \widetilde{\bPhi},  \widetilde{\bPsi} \},
\end{equation}
where
$\rS_i^\Theta = \left[ \bsigma^1_{i, \Theta}, \dots, \bsigma^{\widetilde{K}_i^\Theta}_{i, \Theta} \right]^T \in 
\mathbb{R}^{{n}_i \times \widetilde{K}_i^\Theta}$, $i = 1,\ldots,D$.
The matrices $\rU$ and $\rY$ are still assembled as in \eqref{eqn:ttuv}--\eqref{eqn:ttyz}, but
the vectors $\bu^i$, $i=1,\ldots,\widetilde{M}^\Phi$, and $\by^j$, $j=1,\ldots,\widetilde{M}^\Psi$
come from HOSVD-LRTD \eqref{eqn:TDv} instead.
At the online stage of Algorithm~\ref{Alg1} the bound \eqref{eqn:nphipsi} is replaced by
$n_{\Phi} \le \min\{\widetilde{M}^\Phi, \widetilde{N}^\Phi\}$ ,
$n_{\Psi} \le \min\{  \widetilde{M}^\Psi, \widetilde{N}^\Psi \}$,
while the core matrices \eqref{eqn:C_TT} at Step 1 are instead computed as
\begin{equation}
\label{eqn:C_hosvd}
\rC^\Theta (\balpha) = \bC^\Theta \times_2 
\left(\rS_1^\Theta \be^1 (\balpha) \right) \times_3 \left(\rS_2^\Theta \be^2 (\balpha)\right) 
\dots\times_{D+1} \left(\rS_D^\Theta \be^D (\balpha)\right)  
\in \R^{\widetilde{M}^\Theta \times \widetilde{N}^\Theta}, \quad
\Theta \in \{ \Phi, \Psi \}.
\end{equation}
Unlike the TT case, no rescaling is needed for HOSVD core matrices \eqref{eqn:C_hosvd}, so the
SVD in Step~2 of the online stage becomes simply
\begin{equation} 
\label{eqn:coresvd}
\rC^{\Phi} (\balpha) = \rU_c \Sigma_c^{\Phi} \rV_c^T, \quad
\rC^{\Psi} (\balpha) = \rY_c \Sigma_c^{\Psi} \rZ_c^T.
\end{equation}

\textbf{CP-TROM}. 
For CP-TROM, the replacement of LRTD in Step 1 of the offline stage with \eqref{eqn:CPv} can be computed through two distinct approaches. Firstly, one can employ a Proper Generalized Decomposition method to compute CP-LRTD incrementally in a greedy manner, gradually increasing the CP-ranks $\widetilde{R}^\Phi$ and $\widetilde{R}^\Psi$ until the target accuracy $\epsilon$ is achieved \cite{diez2018algebraic}. Alternatively, the target ranks $\widetilde{R}^\Phi$ and $\widetilde{R}^\Psi$ can be specified initially, and an ALS algorithm can be used to compute CP-LRTD \cite{ReviewTensor}. In this context, we opt for the latter approach due to the availability of high-performance ALS implementations.

For the matrices
\begin{align}
\label{eqn:cpuv}
\rU & = [\bu^1, \dots, \bu^{\widetilde{R}^\Phi}] \in \R^{M \times \widetilde{R}^\Phi}, \quad 
\rV = [\bv^1, \dots, \bv^{\widetilde{R}^\Phi}] \in \R^{N \times \widetilde{R}^\Phi}, \\
\label{eqn:cpyz}
\rY & = [\by^1, \dots, \by^{\widetilde{R}^\Psi}] \in \R^{M \times \widetilde{R}^\Psi}, \quad 
\rZ = [\bz^1, \dots, \bz^{\widetilde{R}^\Psi}] \in \R^{N \times \widetilde{R}^\Psi},
\end{align}
their thin QR factorizations are computed
$\rU = \rQ_U \rR_U$, $\rV = \rQ_V \rR_V$,  
$\rY = \rQ_Y \rR_Y$, $\rZ = \rQ_Z \rR_Z$,
in order to obtain the cores
\begin{eqnarray}
\mbox{core}(\widetilde{\bPhi}) & = 
\{ \rR_U, \rR_V \in \mathbb{R}^{\widetilde{R}^\Phi \times \widetilde{R}^\Phi}, 
\bsigma_{i, \Phi}^r, \; i=1,\ldots,D,\; r=1,\ldots, \widetilde{R}^\Phi \}, \\ 
\mbox{core}(\widetilde{\bPsi}) & = 
\{ \rR_Y, \rR_Z \in \mathbb{R}^{\widetilde{R}^\Psi \times \widetilde{R}^\Psi}, 
\bsigma_{i, \Psi}^r, \; i=1,\ldots,D,\; r=1,\ldots, \widetilde{R}^\Psi \}.
\end{eqnarray}

At the online stage instead of \eqref{eqn:nphipsi} we simply require that $n_\Theta \leq \widetilde{R}^\Theta$
for both $\Theta \in \{ \Phi, \Psi \}$. At Step 1, the core matrices \eqref{eqn:C_hosvd} are replaced with
\begin{equation}
\rC^\Phi (\balpha) = \rR_U \rS^\Phi (\balpha) \rR_V^T 
\in \mathbb{R}^{\widetilde{R}^\Phi \times \widetilde{R}^\Phi}, \quad
\rC^\Psi (\balpha) = \rR_Y \rS^\Psi (\balpha) \rR_Z^T
\in \mathbb{R}^{\widetilde{R}^\Psi \times \widetilde{R}^\Psi},
\end{equation}
where
\begin{equation}
\rS^\Phi (\balpha) = \mbox{diag}(s_1^\Theta,\ldots, s_{\widetilde{R}^\Theta}^\Theta), \mbox{~with~~}
s_r^\Theta = \prod_{i=1}^{D} \left\langle \bsigma_{i, \Theta}^r, \be^i(\balpha) \right\rangle, \quad 
r = 1,\ldots, \widetilde{R}^\Theta,
\end{equation}
for both $\Theta \in \{ \Phi, \Psi \}$. Similarly to the HOSVD variant, no rescaling of the core matrices is 
needed, so the SVD in Step 2 of the offline stage is \eqref{eqn:coresvd}.

\section{Representation and interpolation estimates} 
\label{s:analysis}

In this section we consider representation capacity of TROM local reduced bases and prove an interpolation 
estimate for the two-stage TROM--DEIM. To do so, we need a few assumptions about the properties of 
the dynamical system \eqref{eqn:GenericPDE}. In particular, we assume that the unique solution to
\eqref{eqn:GenericPDE} exists on $(0,T_1)$ for all $\balpha \in\cA_1$, with some $T_1 > T$ 
and $\overline{\cA} \subset \cA_1$. Also, $\blf_{\balpha}$ is continuous with continuous derivatives in $\bu$ and $\balpha$ up to order $p$: 
 \begin{equation}
 	\label{eqn:Assf}
 	\blf_{\balpha}=\blf(t,\bu,\balpha) \in {C([0,T_1],C^p(\mathbb{R}^M\times\cA_1))},
 \end{equation}
where $p$ is interpolation order parameter from \eqref{eqn:lagrange}. 
For a vector function we define its $C^p$ norm as maximum over all components 
$\|\blf\|_{C^p}= \max_i \|f_i\|_{C^p}$.

The assumption in \eqref{eqn:Assf} implies that  the solution of \eqref{eqn:GenericPDE} is  smooth with 
respect to parameters. More precisely,  it holds (cf. \cite[Theorem~V.4.1]{Hartman}):
\begin{equation}
	\label{eqn:AssU}
	\bu \in C([0,T] \times \overline{\cA})^M, \quad 
	\frac{\partial^{\mathbf{j}} \bu}{\partial \alpha^{j_1}_1\dots\partial\alpha^{j_D}_D} 
	\in C([0,T] \times \overline{\cA})^M, \quad |\mathbf{j}| \le p.
\end{equation}
Letting
$
C_\bu= \max\limits_{|\mathbf{j}| \le p} \left\| 
\frac{\partial^{\mathbf{j}} \bu}{\partial \alpha^{j_1}_1\dots\partial\alpha^{j_D}_D} 
\right\|_{C([0,T] \times \overline{\cA})},
$
we apply the chain rule and use \eqref{eqn:AssU} to estimate
\begin{equation}
\label{eqn:setF}
\max\limits_{|\mathbf{j}| \le p} \left\|
\frac{\partial^{\mathbf{j}} \blf(t,\bu(\balpha),\balpha)}
{\partial \alpha^{j_1}_1\dots\partial\alpha^{j_D}_D}
\right\|_{C([0,T] \times \overline{\cA})}\le C\, {\sup_{t\in(0,T)}\|\blf(t)\|_{C^p}}(1+C_\bu^p)=: C_{\blf},
\end{equation}
with some $C_{\blf}$ independent of FOM dimensions.

For an arbitrary fixed $\balpha = (\alpha_1,\ldots,\alpha_D)^T \in \cA$, 
not necessarily from the sampling set, consider the FOM $\bu$-snapshots of \eqref{eqn:GenericPDE} 
and the corresponding $\blf$-snapshots,
 \[
\bu^k=\bu(t_k,\balpha),\quad \blf^k = \blf_{\balpha}(t_k,\bu^k),
\quad \text{for}~~0\le t_1,\dots,t_N<T.
 \]

Denote by $\widehat\bu_i$, $i=1,\dots,n$, the basis vectors of the local reduced $\bu$-space,  i.e., 
$\widehat\bu_i$ are the first $n$ left singular vectors of $\widetilde\Phi (\balpha)$. This local basis used 
to represent TROM solution admits the following representation estimate~\cite{mamonov2022interpolatory}: 
\begin{equation}
	\label{eqn:est1}
	\frac1{NM}\mbox{\small $\displaystyle \sum_{k=1}^{N}$} 
	\left\| \bu^k - \mbox{\small $\displaystyle\sum_{j=1}^{n} $}
	\left\langle \bu^k, \widehat\bu_j \right\rangle \widehat\bu_j \right\|^2_{\ell^2} \\
	\le \frac{C_1}{NM} \left(\eps^2\left\| \bPhi \right\|_F + 
	\mbox{\small $\displaystyle\sum_{i = n+1}^{N}$} \tsigma_i^2 \right) + C_2 \delta^{2p} ,
\end{equation}
where $\tsigma_i$ are the singular values of $\widetilde{\Phi}(\balpha)$ and 
$\delta_i$ are the mesh parameter of the sampling,
\begin{equation}
	\delta_i = \max\limits_{1 \leq j  \leq K_i-1} \left| \widehat{\alpha}_j^i - \widehat{\alpha}_{j+1}^i \right|, 
	\quad i = 1,\ldots,D,\quad\text{and}~\delta^p=\textstyle\sum_{i=1}\limits^D\delta_i^p.
\end{equation}
Constants $C_1$ and $C_2$ in \eqref{eqn:est1} depend only on the stability of the interpolation 
procedure and bounds on partial derivatives of $\bu$ from \eqref{eqn:AssU}.  The scaling $1/(NM)$ 
accounts for the variation of dimensions $N$ and $M$, which may correspond to the number of 
temporal and spatial degrees of freedom, if \eqref{eqn:GenericPDE} results from a discretization of a 
parabolic PDE. 
In this case and for uniform grids, the quantity on the left-hand side of \eqref{eqn:est1} is consistent 
with the $L^2(0, T, L^2(\Omega))$ norm. 

We now want to derive an interpolation bound for the two-stage TROM--DEIM. For the POD--DEIM ROM 
such bound is given in the original paper~\cite{chaturantabut2010nonlinear}. In our notation the result reads: 
For some $\blf\in\mathbb{R}^M$ let $\widehat\blf= \rY_{\rm pod} (\rP \rY_{\rm pod})^{-1}\rP \blf$, then it holds
$
\|\blf-\widehat\blf\|\le \|(\rP \rY_{\rm pod})^{-1}\|\|( \rI- \rY_{\rm pod}( \rY_{\rm pod})^T)\blf\|.
$
A bound on $C_\ast(n)=\|(\rP \rY_{\rm pod})^{-1}\|$ was also derived in~\cite{chaturantabut2010nonlinear}.
Applying this result for \emph{in-sample} $\balpha \in \widehat{\cA}$, one finds the interpolation estimate
\begin{equation}
	\label{eqn:est3}
\max_{k=1,\dots,N} 
\left\|\blf^k- \widehat\blf^k\right\|_{\ell^2}\le C_\ast(n)\sigma_n(\Psi_{\rm pod}), 
\quad \mbox{with~} \widehat\blf^k= \rY_{\rm pod} (\rP \rY_{\rm pod})^{-1}\rP \blf^k.
\end{equation}
In the context of ROM for parametric systems, the estimate in \eqref{eqn:est3} shows the following limitations: 
(i) It is not clear how it can be extended for an out-of-sample $\balpha$, and 
(ii) For the case of higher variability w.r.t. parameters, the singular values of  $\Psi_{\rm pod}$ 
may decrease relatively slowly (see Figure~\ref{fig1}) requiring higher dimensions of reduced $\blf$-spaces.  
The interpolation bound for the two-stage TROM--DEIM given below addresses both of the above issues.

\begin{theorem}
	\label{Th1}
	For any given $\balpha \in \cA$,  let $ \rY_{\balpha}= \rY   \rY_n$,  $\rP_{\balpha}= \rP_n\rP$ for   the  local pointwise interpolation  and  $\rP_{\balpha}=\rP$ for the least-square local interpolation. Also let
	$\widehat\blf^k= \rY_{\balpha}(\rP_{\balpha} \rY_{\balpha})^{\dagger}\rP_{\balpha}\blf^k$, and
	$\sigma_i$ are singular values of $\widetilde\Psi(\balpha)$. It holds
	\begin{equation}
		\label{eqn:est2}
		\frac1{NM}\mbox{\small $\displaystyle \sum_{k=1}^{N}$} 
		\left\|\blf^k- \widehat\blf^k\right\|_{\ell^2}^2 
		\le \frac{C_\ast}{NM} \left( C_1 \eps^2\left\| \bPsi \right\|_F + 
		\mbox{\small $\displaystyle\sum_{i = n+1}^{N}$} \tsigma_i^2 \right) + C_2 \delta^{2p} ,
	\end{equation}
	with $C_\ast =\|(\rP_{\balpha} \rY_{\balpha})^{-1}\|$  for   the  local pointwise interpolation  and $C_\ast =\|(\rP \rY)^{-1}\|$ for the least-square local interpolation.
	Constants $C_1$, $C_2$ are independent of $\balpha$, sampling grid, local TROM dimension $n$, tensor ranks and FOM dimensions.
\end{theorem}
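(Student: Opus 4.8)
The plan is to mimic the structure of the representation estimate \eqref{eqn:est1}, which already controls how well the \emph{local reduced $\blf$-space} $\mbox{range}(\rY_\balpha)$ represents the true $\blf$-snapshots $\blf^k$, and then upgrade from an orthogonal-projection bound to a DEIM/least-squares-interpolation bound by paying the usual factor $C_\ast$. The starting point is the pointwise DEIM identity from \cite{chaturantabut2010nonlinear}, adapted to an oblique projector built from a possibly rectangular selection: for any $\blf\in\R^M$ and $\widehat\blf=\rY_\balpha(\rP_\balpha\rY_\balpha)^\dagger\rP_\balpha\blf$ one has
\begin{equation}
\label{eqn:deimbasic}
\|\blf-\widehat\blf\|_{\ell^2}\le \|(\rP_\balpha\rY_\balpha)^{\dagger}\|\,\big\|(\rI-\rY_\balpha\rY_\balpha^T)\blf\big\|_{\ell^2}.
\end{equation}
The key observation is that $\rY_\balpha=\rY\rY_n$ has orthonormal columns (since $\rY$ is orthonormal and, by \eqref{eqn:PhieTT} applied to $\widetilde{\bPsi}$, $\rY_n=[\rY_c]_{:,1:n}$ consists of $n$ orthonormal columns of the left singular factor of $\rC^\Psi(\balpha)\rW^\Psi$), so $\rY_\balpha\rY_\balpha^T$ is exactly the orthogonal projector onto $\mbox{range}(\rY_\balpha)$, which is precisely the span of the first $n$ left singular vectors $\widehat\bu_i$ (for $\widetilde\Psi$) appearing in \eqref{eqn:est1}. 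Hence $\big\|(\rI-\rY_\balpha\rY_\balpha^T)\blf^k\big\|_{\ell^2}$ is the same quantity whose mean-square over $k$ is bounded in the $\blf$-analogue of \eqref{eqn:est1}.

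The steps I would carry out are: (1) Verify that $\mbox{range}(\rY\rY_n)$ coincides with the local reduced $\blf$-space spanned by the first $n$ left singular vectors of $\widetilde\Psi(\balpha)$, using the thin-SVD identity \eqref{eqn:PhieTT} for $\widetilde\Psi$; this makes \eqref{eqn:deimbasic} applicable with the orthogonal projector. (2) Apply \eqref{eqn:deimbasic} at each snapshot $\blf^k$, square, sum over $k=1,\dots,N$, divide by $NM$, and pull out $C_\ast^2$ (with $C_\ast=\|(\rP_\balpha\rY_\balpha)^{-1}\|$ in the pointwise case where $\rP_\balpha\rY_\balpha$ is square invertible, and $C_\ast=\|(\rP\rY)^{-1}\|$ in the least-squares case, noting $(\rP_\balpha\rY_\balpha)^\dagger=((\rP\rY)\rY_n)^\dagger$ and that appending orthonormal columns cannot increase the smallest singular value, so $\|((\rP\rY)\rY_n)^\dagger\|\le\|(\rP\rY)^{-1}\|$). (3) Invoke the $\blf$-version of the representation estimate \eqref{eqn:est1} — which is exactly \cite{mamonov2022interpolatory} applied to $\widetilde{\bPsi}$ in place of $\widetilde{\bPhi}$, with $\widehat\bu_j$ the left singular vectors of $\widetilde\Psi(\balpha)$ and $\tsigma_i=\sigma_i(\widetilde\Psi(\balpha))$ — to bound $\frac1{NM}\sum_k\|(\rI-\rY_\balpha\rY_\balpha^T)\blf^k\|^2_{\ell^2}$ by $\frac{C_1}{NM}(\eps^2\|\bPsi\|_F+\sum_{i=n+1}^N\tsigma_i^2)+C_2\delta^{2p}$. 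Combining (2) and (3) gives \eqref{eqn:est2} with the constant $C_\ast$ factored out in front; since squaring \eqref{eqn:deimbasic} produces $C_\ast^2$ but the statement writes $C_\ast$, one absorbs one factor of $C_\ast$ (bounded on $\cA$) into $C_1,C_2$, or simply reads $C_\ast$ in \eqref{eqn:est2} as $C_\ast^2$ up to relabeling — a cosmetic point to reconcile with the theorem statement.

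The main obstacle is step (3): justifying that the representation estimate \eqref{eqn:est1}, originally stated for the solution snapshots and $\widetilde\bPhi$, transfers verbatim to the nonlinear-term snapshots $\blf^k$ and $\widetilde\bPsi$. This requires the parameter-regularity input \eqref{eqn:Assf}–\eqref{eqn:setF}: the chain-rule bound \eqref{eqn:setF} supplies exactly the $C^p$-control of $\balpha\mapsto\blf(t,\bu(t,\balpha),\balpha)$ needed so that the Lagrange interpolation error in the ``extraction--interpolation'' step \eqref{eqn:extractbt} producing $\widetilde\Psi(\balpha)$ is $O(\delta^p)$ in each snapshot, which is what feeds the $C_2\delta^{2p}$ term. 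One must check that the constants $C_1,C_2$ coming out of this argument genuinely depend only on the interpolation stability and on $C_\blf$ (hence on $C_\bu$ and $\sup_t\|\blf(t)\|_{C^p}$), and not on $N$, $M$, the ranks, or $\balpha$ — the $1/(NM)$ normalization and the use of spectral-norm bounds $\|\rA\times_k\bB\|_F\le\|\rA\|\|\bB\|_F$ (as in \eqref{eqn:tphibound}) are what make this uniformity work. The remaining bookkeeping — that $\widetilde\Psi(\balpha)$ differs from the exact local snapshot matrix by a term controlled by $\eps\|\bPsi\|_F$ (the tensor-compression error) plus the interpolation error — is routine and parallels the $\bPhi$ case already treated in \cite{mamonov2022interpolatory}.
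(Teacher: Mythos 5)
Your proposal is correct and follows essentially the same route as the paper's proof: reduce the DEIM/LS error to the orthogonal-projection error onto $\mathrm{range}(\rY\rY_n)$ via the oblique-projector identity (paying $\|(\rP_\balpha\rY_\balpha)^{\dagger}\|$, and $\le\|(\rP\rY)^{-1}\|$ in the LS case — your direct argument via $\sigma_{\min}((\rP\rY)\rY_n)\ge\sigma_{\min}(\rP\rY)$ is an equivalent, slightly cleaner version of the paper's projector argument), identify $\rY\rY_n$ with the leading left singular vectors of $\widetilde\Psi(\balpha)$, and then split the residual into truncated singular values, the $\eps\|\bPsi\|_F$ compression term, and the $O(\delta^p)$ Lagrange interpolation error controlled by the chain-rule bound \eqref{eqn:setF}. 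The only difference is that you invoke the $\bPsi$-analogue of \eqref{eqn:est1} for the last step while the paper rederives it explicitly, and your remark about $C_\ast$ versus $C_\ast^2$ reflects a looseness already present in the paper's own statement.
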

\begin{proof} 
Note that for the local DEIM the matrix $\rP_{\balpha} \rY_{\balpha} = \rP_n \rP \rY \rY_n$ is 
invertible and hence its inverse coincides with the pseudo-inverse, so we use the notation
$(\rP_{\balpha} \rY_{\balpha})^{\dagger}$ throughout the proof to cover both local hyper-reduction
variants. Clearly, for both variants $(\rP_{\balpha} \rY_{\balpha})^{\dagger}$ is the \emph{left} 
inverse of $\rP_{\balpha} \rY_{\balpha}$ and so we have the identity 
$(\rI- \rY_{\balpha}(\rP_{\balpha} \rY_{\balpha})^{\dagger}\rP_{\balpha}) \rY_{\balpha}=0$. 
We employ it to estimate
\begin{equation}\label{aux671}
\begin{split}
\|\blf^k- \widehat\blf^k\|_{\ell^2} & 
=\|(\rI- \rY_{\balpha}(\rP_{\balpha} \rY_{\balpha})^{\dagger}\rP_{\balpha})\blf^k\|_{\ell^2}
=\|(\rI- \rY_{\balpha}(\rP_{\balpha} \rY_{\balpha})^{\dagger}\rP_{\balpha})
(\rI- \rY_{\balpha} \rY_{\balpha}^T)\blf^k\|_{\ell^2}\\
&\le \|\rI- \rY_{\balpha}(\rP_{\balpha} \rY_{\balpha})^{\dagger}\rP_{\balpha}\|
\|(\rI- \rY_{\balpha} \rY_{\balpha}^T)\blf^k\|_{\ell^2}.
\end{split}
\end{equation}
For the local DEIM, the first factor on the right-hand side of \eqref{aux671} 
can be estimated by the same argument as in ~\cite{chaturantabut2010nonlinear}:
\begin{equation}\label{aux679}
\|\rI- \rY_{\balpha}(\rP_{\balpha} \rY_{\balpha})^{\dagger}\rP_{\balpha}\|
=\| \rY_{\balpha}(\rP_{\balpha} \rY_{\balpha})^{\dagger}\rP_{\balpha}\|\le 
\| \rY_{\balpha}\|\|(\rP_{\balpha} \rY_{\balpha})^{\dagger}\|\|\rP_{\balpha}\|= \|(\rP_{\balpha} \rY_{\balpha})^{\dagger}\|,
\end{equation}
where the first identity holds since 
$ \rY_{\balpha}(\rP_{\balpha} \rY_{\balpha})^{\dagger}\rP_{\balpha}$ is a projector. 

For the local LS fitting 
the matrix $ \rY_{\balpha}(\rP_{\balpha} \rY_{\balpha})^{\dagger}\rP_{\balpha}$ is still a projector.
Furthermore, for any $\bx\in\mathbb{R}^{\widetilde N}$, where $\widetilde N$ is the dimension of the 
universal reduced space $\widetilde Y$, and $\by=(\rP_{\balpha} \rY_{\balpha})^{\dagger}\bx$, 
it holds $\rP_{\balpha} \rY_{\balpha}\by=\rP'\bx$, where $\rP'$ is the orthogonal projector on the 
range of $\rP_{\balpha} \rY_{\balpha}$. 

Let $\widehat\bu=  \rY_n\by$, then we have 
$\rP \rY\widehat\bu = \rP_{\balpha} \rY\widehat\bu=\rP_{\balpha} \rY_{\balpha}\by=\rP'\bx$ implying 
\[
\|  \rY_n(\rP_{\balpha} \rY_{\balpha})^{\dagger}\bx\|=\|\widehat\bu\|=\|(\rP \rY)^{\dagger}\rP'\bx\|\le
\|(\rP \rY)^{\dagger}\|\|\rP'\bx\|\le\|(\rP \rY)^{\dagger}\|\|\bx\|.
\]
This yields $\|  \rY_n(\rP_{\balpha} \rY_{\balpha})^{\dagger}\|\le \|(\rP \rY)^{\dagger}\|$ 
and so for the local LS fitting we have 
\begin{equation}\label{aux679b}
	\begin{split}
		\|\rI- \rY_{\balpha}(\rP_{\balpha} \rY_{\balpha})^{\dagger}\rP_{\balpha}\|&=\| \rY_{\balpha}(\rP_{\balpha} \rY_{\balpha})^{\dagger}\rP_{\balpha}\|\le 
		\| \rY\|\|  \rY_n(\rP_{\balpha} \rY_{\balpha})^{\dagger}\|\|\rP_{\balpha}\|\\
		&= \|  \rY_n(\rP_{\balpha} \rY_{\balpha})^{\dagger}\|\le \|(\rP \rY)^{\dagger}\|.
	\end{split}
\end{equation}

Consider the SVD of $\widetilde{\Psi} (\balpha) \in \R^{M \times N}$ from \eqref{eqn:extractbt} given by
	\begin{equation}
		\widetilde{\Psi}(\balpha) = \widetilde{ \rY} \widetilde{\Sigma} \widetilde{\rZ}^T,
		~~\text{with}~~ \widetilde{\Sigma} = \text{diag} (\tsigma_1,\dots,\tsigma_N).
	\end{equation}
Then from the definition of the basis for the local reduced $\blf$-space it follows that 
$ \rY_{\balpha}= \rY  \rY_n = [ \by_1, \ldots,  \by_n] \in \R^{M \times n}$ 
are the first $n$ columns of $\widetilde{\rY}$. 
Let $\rF(\balpha)=[\blf^1,\dots,\blf^N]\in\mathbb{R}^{M\times N}$, then
	\begin{align}
		 \mbox{\small $\displaystyle\sum_{k=1}^{N} $}
		\left\|\blf^k- \widehat\blf^k\right\|_{\ell^2}^2 &=
		\left\| (\rI  -  \rY_{\balpha}  \rY_{\balpha}^T) \rF(\balpha) \right\|^2_F 
		\le  \left( \left\| (\rI -  \rY_{\balpha}  \rY_{\balpha}^T) (\rF(\balpha) - \widetilde{\Psi}(\balpha)) \right\|_F
		+ \left\| (\rI -  \rY_{\balpha}  \rY_{\balpha}^T) \widetilde{\Psi}(\balpha) \right\|_F \right)^2 \nonumber \\
		& \le \left( \left\| \rF(\balpha) - \widetilde{\Psi}(\balpha) \right \|_F 
		+ \left\| (\rI -  \rY_{\balpha}  \rY_{\balpha}^T) \widetilde{\Psi}(\balpha) \right\|_F \right)^2, 
		\label{eqn:aux413}
	\end{align}
where we used triangle inequality and $\| \rI -  \rY_{\balpha}  \rY_{\balpha}^T\| \le 1$ 
for the spectral norm of the projector.
For the last term in \eqref{eqn:aux413}, we observe
\begin{equation}\label{aux701}
	\left\| (\rI -  \rY_{\balpha}  \rY_{\balpha}^T) \widetilde{\Psi}(\balpha) \right\|_F =
	\left\| \widetilde{ \rY} \; \text{diag}(0,\dots,0,\tsigma_{n+1},\dots,\tsigma_N) \; \widetilde{\rZ}^T \right\|_F =
	\left( \mbox{\small $\displaystyle\sum_{j = n+1}^{N}$} \tsigma_j^2 \right)^{\frac12}.
	\end{equation}
	To handle the first term of \eqref{eqn:aux413}, consider the interpolation of the full (non-compressed) $\blf$-snapshot tensor
		$\Psi_I (\balpha) = \bPsi \times_2 \be^1 (\balpha) \times_3 \be^2(\balpha) \dots \times_{D+1} \be^D(\balpha)$,
	and proceed using the triangle inequality
	\begin{equation}
		\label{eqn:aux438}
		\big\| \rF(\balpha) - \widetilde{\Psi}(\balpha) \big\|_F \le 
		\big\| \rF (\balpha) - \Psi_I (\balpha) \big\|_F + 
		\big\| \Psi_I(\balpha) - \widetilde{\Psi} (\balpha) \big\|_F.
	\end{equation}
The polynomial interpolation procedure is stable in the sense that
\begin{equation}
	\label{eqn:int_stab}
	\left(\mbox{\small $\displaystyle\sum_{j=1}^{K_i}$} \left| e^i_j(a \be_i) \right|^2 \right)^{\frac12} \le C_e,
\end{equation}
where $\be_i \in \R^{K_i}$ is the $i^{\mbox{\scriptsize th}}$ column of the $K_i \times K_i$ identity matrix, 
and with some $C_e$ independent of $a \in [\alpha_i^{\min}, \alpha_i^{\max}]$ 
and $i = 1, \ldots, D$ (e.g., $C_e=1$ for linear interpolation).
We use this and \eqref{eqn:TensApprox} to bound the second term
	of \eqref{eqn:aux438}. Specifically,
	\begin{equation}
		\label{eqn:aux442}
		\begin{split}
			\left\| \Psi_I (\balpha) - \widetilde{\Psi}(\balpha) \right\|_F & =
			\left\| (\bPsi - \widetilde{\bPsi}) 
			\times_2 \be^1(\balpha) \times_3 \be^2(\balpha) \dots \times_{D+1} \be^D (\balpha) \right\|_F \\
			& \le \left\| \bPsi - \widetilde{\bPsi} \right\|_F 
			\| \be^1 (\balpha) \|_{\ell^2} \| \be^2(\balpha) \|_{\ell^2} \dots \| \be^D (\balpha) \|_{\ell^2} \\
			& \le (C_e)^D \left\| \bPsi - \widetilde{\bPsi} \right\|_F 
			\le (C_e)^D \eps\left\| \bPsi \right\|_F.
		\end{split}
	\end{equation}
	It remains to handle the first term in \eqref{eqn:aux438}. 	The choice of interpolation procedure in \eqref{eqn:lagrange} implies that  for any sufficiently smooth $f:  [\alpha_i^{\min}, \alpha_i^{\max}] \to \R$ it holds
	\begin{equation}
		\label{eqn:int_aprox}
		\sup_{a \in [\alpha_i^{\min}, \alpha_i^{\max}]} 
		\Big| f(a) - \mbox{\small $\displaystyle\sum_{j=1}^{K_i}$} e^i_j \left( a \be_i \right) f(\halpha_i^j) \Big|\le
		C_a \| f^{(p)} \|_{C( [\alpha_i^{\min}, \alpha_i^{\max}])} \delta_i^p, 
	\end{equation}
	for $i = 1, \dots, D$, where the constant $C_a$ does not depend on $f$. 
	
	Using the shortcut notation $\blf^k(\alpha_1,\dots, \alpha_D)=\blf(t_k,\bu(\alpha_1^, \dots, \alpha_D), \alpha_1, \dots, \alpha_D)$
and	interpolation property \eqref{eqn:int_aprox}, we compute
	\begin{equation*}
		\begin{split}
			\left( \bPsi \times_2 \be^1(\balpha) \right)_{:,i_2,\dots,i_D,k} & 
			= \mbox{\small $\displaystyle\sum_{j=1}^{K_1}$} e^1_j(\balpha) 
			\blf^k(\halpha_1^{j}, \halpha_2^{i_2}, \dots, \halpha_D^{i_D}) \\
			& = \blf^k(\alpha_1, \halpha_2^{i_2} \dots, \halpha_D^{i_D}) + \Delta^1_{:,i_2,\dots,i_D,k},
		\end{split}
	\end{equation*}
where $\bhalpha \in \widehat{\cA}$. The $\Delta$-term obeys a component-wise bound
	\begin{equation*}
		|\Delta^1_{:,i_2,\dots,i_D,k}| \le C_a \sup_{a \in [\alpha_1^{\min}, \alpha_1^{\max}]}
	 \left| \frac{\partial^p \blf^k}{\partial \alpha^p_1}(a, \halpha_2^{i_2}, \dots, \halpha_D^{i_D}) \right|\delta_1^p
	\end{equation*}
	where the absolute value of vectors is understood entry-wise.
	Analogously, we compute
	\begin{equation}
	\begin{split}
		\big(\bPsi \times_2 \be^1(\balpha) & \times_3 \be^2(\balpha) \big)_{:,i_3,\dots,i_D,k} = 
		\left( (\bPsi \times_2 \be^1(\balpha)) \times_2 \be^2(\balpha) \right)_{:,i_3,\dots,i_D,k}\\
		& = \mbox{\small $\displaystyle\sum_{j=1}^{K_2}$} e^2_j(\balpha) 
		\left( \blf^k(\alpha_1, \halpha_2^{j}, \halpha_3^{i_3}, \dots, \halpha_D^{i_D}) 
		+ \Delta^1_{:, j, i_3,\dots,i_D,k} \right) \\
		& = \blf^k(\alpha_1, \alpha_2, \halpha_3^{i_3}, \dots, \halpha_D^{i_D}) 
		+ \Delta^2_{:,i_3,\dots,i_D,k} + \mbox{\small $\displaystyle\sum_{j=1}^{K_2}e^2_j(\balpha)$} \Delta^1_{:,j,i_3,\dots,i_D,k},
	\end{split}
	\end{equation}
	with a component-wise bound for the remainder
	\begin{equation*}
		\begin{split}
			\Big| \Delta^2_{:, i_3, \dots, i_D, k} + 
			\mbox{\small $\displaystyle\sum_{j=1}^{K_2}$} e^2_j(\balpha)  \Delta^1_{:, j, i_3, \dots, i_D, k} \Big| 
			\le C_a & \sup_{a \in [\alpha_2^{\min}, \alpha_2^{\max}]} 
			\left| \frac{\partial^p \blf^k}{\partial \alpha^p_2}
			(\alpha_1, a, \halpha_3^{i_3}, \dots, \halpha_D^{i_D}) \right| \delta_2^p \\
			+ \; C_e \; C_a & \sup_{a \in [\alpha_1^{\min}, \alpha_1^{\max}]}
			\left| \frac{\partial^p \blf^k}{\partial \alpha^p_1}
			(a, \halpha_2^{i_2}, \dots, \halpha_D^{i_D}) \right| \delta_1^p.
		\end{split}
	\end{equation*}
	Applying the same argument repeatedly, we obtain
	\begin{equation}\label{aux778}
		\begin{split}
			\left( \Psi_I (\balpha) \right)_{:, k} & = 
			\left( \bPsi \times_2 \be^1(\balpha) \times_3 \be^2(\balpha) \dots \times_{D+1} \be^D(\balpha) \right)_{:, k} \\
			& = \blf^k (\alpha_1, \alpha_2, \dots, \alpha_D) 
			+ \Delta_{:, k} = \left( \rF(\balpha) \right)_{:, k} + \Delta_{:,k},
		\end{split}
	\end{equation}
	with a component-wise bound for the remainder
	\begin{align*}
		\left| \Delta_{:,k} \right| \le C_a \Big( & \sup_{a \in [\alpha_D^{\min}, \alpha_D^{\max}]}
		\left|\frac{\partial^p \blf^k}{\partial \alpha^p_D}
		(\alpha_1, \dots, \alpha_{D-1}, a) \right| \delta_D^p + \dots \\
		+\, (C_e)^{D-2} & \sup_{a \in [\alpha_2^{\min}, \alpha_2^{\max}]}
		\left| \frac{\partial^p \blf^k}{\partial \alpha^p_2}
		( \alpha_1, a, \halpha_3^{i_3}, \dots, \halpha_D^{i_D}) \right| \delta_2^p \\
		+\, (C_e)^{(D-1)} & \sup_{a \in [\alpha_1^{\min}, \alpha_1^{\max}]}
		\left| \frac{\partial^p \blf^k}{\partial \alpha^p_1}
		(a, \halpha_2^{i_2}, \dots, \halpha_D^{i_D}) \right| \delta_1^p \Big)\le C_a \max \left\{ (C_e)^{(D-1)}, 1 \right\}C_{\blf}\delta^p.
	\end{align*}
Using the definition of the Frobenius norm and \eqref{aux778},  we arrive at
	\begin{equation}
		\label{eqn:aux450}
		\left\| \rF(\balpha) - \Psi_I (\balpha) \right\|_F \le 
		\sqrt{N M} \; C_a \max \left\{ (C_e)^{(D-1)}, 1 \right\}C_{\blf}\delta^p,
	\end{equation}
with constants $C_a$, $C_e$, $C_{\blf}$ from \eqref{eqn:int_aprox}, \eqref{eqn:int_stab}, 
and \eqref{eqn:setF}, respectively. 
Combining \eqref{aux671}--\eqref{aux679b}, \eqref{eqn:aux413}--\eqref{eqn:aux438}, \eqref{eqn:aux442} 
and \eqref{eqn:aux450} proves the final result.
\end{proof}

It is interesting to note that for the local LS fitting variant of the two-stage hyper-reduction the 
constants in the interpolation Theorem~\ref{Th1} are all independent of $\balpha$. 
Moreover, the constant $C_\ast$ is essentially the same as in the original estimate~\eqref{eqn:est3}.

\section{Numerical experiments}
\label{s:num}

We perform several numerical experiments to assess the performance of the TROM
and compare it to the POD--DEIM ROM. Our testing  is performed for dynamical systems 
arising from discretizations of the parameter-dependent Burgers and Allen-Cahn equations. 
All results presented below are for the local least squares second stage of the hyper-reduction. 
The local DEIM as the second stage were found to yield very close results. 

\subsection{Parameterized 1D Burgers equation}
\label{sec:burgers}

As a first example consider the 1D Burgers equation:
Find $u(t,x)$, solving
\begin{equation}
u_t = \alpha_1 u_{xx}- uu_x, \quad \text{for}~x\in(0,1),~t\in(0,T), \quad u(t,0)=u(t,1)=0,
\label{eqn:Burgers}
\end{equation}
where $\alpha_1 > 0$ is the viscosity parameter. The initial condition is in parametric form
\begin{equation}
u_{\balpha}(0, x) = u_0(x, \alpha_2)=\left\{
\begin{array}{rl}
	1,&\quad x\in(0,\alpha_2)\\
	0,&\quad x\in[\alpha_2,1)
\end{array}
\right.,\quad \text{for}~\alpha_2 \in (0,1).
\label{eqn:BIC}
\end{equation}
 Thus, we consider a two-parameter system,  with the parameter 
domain $\cA = [0.01, 0.5] \times [0.2,0.8]$.

We discretize \eqref{eqn:Burgers}--\eqref{eqn:BIC} in space using a first order upwind 
finite difference (FD) scheme on a uniform grid with mesh size $h=1/M$ to obtain a dynamical system 
of the form \eqref{eqn:GenericPDE}, where $\rA_{\balpha} \in \mathbb{R}^{M \times M}$ 
depends on $\alpha_1$ only and the non-linear term $\blf_{\balpha}(t, \bu) = \blf(\bu)$ 
is the discretization of $-uu_x$ that does not  contain any dependence on $\balpha$.

To generate FOM snapshots we integrate in \eqref{eqn:GenericPDE} time using a semi-implicit 
BDF2 method with equidistant time stepping: for each $\widehat{\balpha}_k \in \widehat{\cA}$,
given $\bphi_j(\widehat{\balpha}_k)$ and $\bphi_{j-1}(\widehat{\balpha}_k)$ in $\mathbb{R}^M$, 
find $\bphi_{j+1}(\widehat{\balpha}_k)$ 
satisfying
\begin{equation}
\frac{3 \bphi_{j+1}(\widehat{\balpha}_k) - 4 \bphi_j(\widehat{\balpha}_k ) + 
\bphi_{j-1}(\widehat{\balpha}_k)}{2 \Delta t} =
\rA_{\widehat{\balpha}_k} \bphi_j(\widehat{\balpha}_k) 
- \left( 2 \bphi_j(\widehat{\balpha}_k) - \bphi_{j-1}(\widehat{\balpha}_k) \right) \odot
\rG_x \bphi_{j+1}(\widehat{\balpha}_k),
\label{eqn:BDF2}
\end{equation}
with first step performed via BDF1.  
Here $j = 1,2,\ldots,N$, $\Delta t= T/N$, $T=1$, and $k=1,\ldots,K$. 
We denote by $\rG_{x} \in \mathbb{R}^{M \times M}$ the matrix discretization of the first 
derivative, with boundary conditions \eqref{eqn:Burgers}. 
The operation $\odot$ denotes entrywise product of vectors in $\mathbb{R}^M$.

Once the time-stepping \eqref{eqn:BDF2} is computed, $\blf$-snapshots are given by 
\begin{equation}
\bpsi_j(\widehat{\balpha}_k) = - \left( 2 \bphi_j(\widehat{\balpha}_k) - \bphi_{j-1}(\widehat{\balpha}_k) \right) 
\odot \rG_x \bphi_{j+1}(\widehat{\balpha}_k), \quad 
j = 1,2,\ldots,N, \quad k=1,\ldots,K.
\end{equation}

\begin{table}[h]
\caption{ 
	Compression ranks $[\widetilde R_1^\Theta, \widetilde R_2^\Theta, \widetilde R_3^\Theta]$, 
	$\Theta \in \{ \Phi, \Psi\}$, for TT-TROM versus compression accuracy $\eps$ and the corresponding 
	compression factors (top). Compression accuracy for CP-TROM versus canonical rank (CP-rank) and the 
	corresponding compression factors (bottom).}
\label{tab1}
\small
\begin{center}
\begin{tabular}{|l|c|c|c|c|c|}
\hline
\multicolumn{1}{|c|}{${\eps}$} & 0.1 & 0.03 & {0.01} & 0.003 & $10^{-3}$ \\
\hline		
$\widetilde{\bPhi}$ TT-ranks        & $[\textbf{7},9,\textbf{4}]$  & $[\textbf{12},20,\textbf{7}]$ & 
$[\textbf{20},42,\textbf{10}]$  & $[\textbf{33},78,\textbf{14}]$  & $[\textbf{44},121,\textbf{17}]$  \\
$\widetilde{\bPsi}$ TT-ranks       & $[\textbf{55},55,\textbf{10}]$ & $[\textbf{93},101,\textbf{16}]$ & 
$[\textbf{139},151,\textbf{20]}$ & $[\textbf{174},210,\textbf{25}]$ & 
{\footnotesize$[\textbf{200},265,\textbf{30}]$} \\
$\mbox{CF}(\bPhi)$     &  18824 & 4894 & 1518 & 536 & 270 \\
$\mbox{CF}(\bPsi)$       &  620 & 202  & 95 & 54 &37 \\
\hline \hline	
\multicolumn{1}{|c|}{CP-rank}     & 50   &100 & 200 &300 & 500  \\  \hline	
$\|\widetilde{\bPhi}-\bPhi\|/\|\bPhi\|$ & $3.22 \cdot 10^{-2}$ & $2.13 \cdot 10^{-2}$ & $1.26 \cdot 10^{-2}$ &
$9.13 \cdot 10^{-3}$ & $5.96 \cdot 10^{-3}$   \\
$\|\widetilde{\bPsi}-\bPsi\|/\|\bPsi\|$ & $1.63 \cdot 10^{-1}$ & $9.72 \cdot 10^{-2}$ & $5.74 \cdot 10^{-2}$ &
$4.12 \cdot 10^{-2}$ & $2.84 \cdot 10^{-2}$ \\
$\mbox{CF}(\bPhi, \bPsi)$ & 8274  & 2749  &  822  & 391 & 149 \\
\hline				
\end{tabular}
\end{center}
\end{table}

\subsubsection{Tensor compression accuracy and TROM solution quality}
\label{sec:tcompacc}

In the first series of experiments we study the performance of TROMs depending on the LRTD accuracy $\eps$. 
We set $N=200$, $M=400$ and use a rather 
fine grid of $32$ uniformly distributed $\widehat{\alpha}_2^j$ values in $[0.2,0.8]$, and $16$ values of 
$\widehat{\alpha}_1^j$ log-uniformly distributed in $[0.01,0.5]$ to define the sampling set $\widehat{\cA}$.

For TT-TROM we start with a low accuracy of ${\eps}=0.1$ (we always set the same target accuracy for 
$\widetilde{\bPhi}$ and $\widetilde{\bPsi}$) and gradually improve it letting  
${\eps}=\{0.1,0.03,0.01, 0.003, 10^{-3}\}$. We report in Table~\ref{tab1} the resulting TT compression 
ranks in the format $[\widetilde R_1^\Theta, \widetilde R_2^\Theta, \widetilde R_3^\Theta]$ for both 
$\Theta \in \{ \Phi, \Psi\}$, and the compression factor 
\begin{equation}
\mbox{CF}(\bTheta) = \#(\bTheta)/\#\mbox{online}(\widetilde{\bTheta}),
\end{equation}
for $\bTheta \in \{ \bPhi, \bPsi \}$, where $\#(\bTheta)$ is the number of entries of the full snapshot
tensor $\bTheta$ and $\#\mbox{online}(\widetilde{\bTheta})$ is the amount of data passed in
Algorithm~\ref{alg:TROM-DEIM} from the offline stage to the online stage. For CP-TROM we used
the combined compression factor 
\begin{equation}
\mbox{CF}(\bPhi, \bPsi) = 
\frac{\#(\bPhi) + \#(\bPsi)}{\#\mbox{online}(\widetilde{\bPhi})  + \#\mbox{online}(\widetilde{\bPsi})}.
\end{equation}
The first and last TT ranks emphasized in bold are responsible for the universal reduced space dimension 
and the maximum possible local reduced space dimension, respectively. For the same $\eps$, the first and last 
Tucker ranks (not shown) of HOSVD-TROM were found to be the same as the first and last TT ranks, but the 
HOSVD-TROM compression factor (not shown) was slightly worse than those of TT-TROM.  
The  higher TT ranks for $\tbPsi$ than for $\tbPhi$ indicate a higher variability in $\blf$-snapshots compared 
to that of $\bu$-snapshots. We shall see that this high variability  results into a 
dramatic accuracy gain of TROMs compared to conventional POD--DEIM ROM.  

\begin{figure}[h]
	\begin{center}\footnotesize
	\begin{tabular}{cc}
	TT-TROM & CP-TROM \\
		\includegraphics[width=0.425\textwidth]{./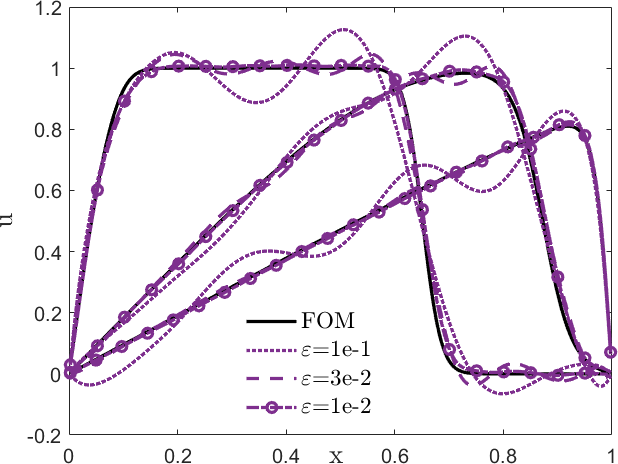} &
		\includegraphics[width=0.425\textwidth]{./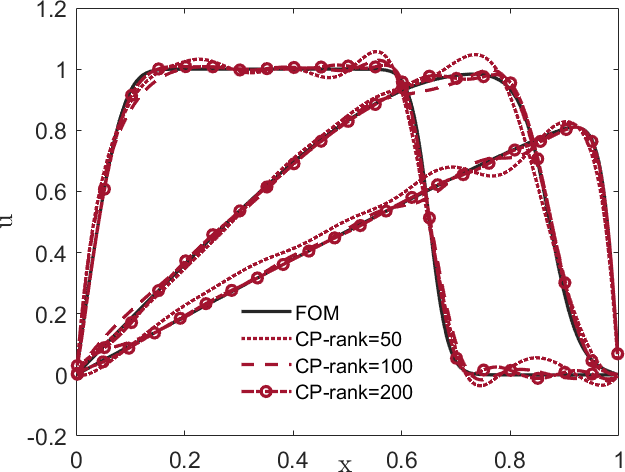} 
		\end{tabular}
	\end{center}
	\caption{FOM and TROM solutions of the discretized Burgers equation at times $t={0.05,0.5,1}$ 
	for out-of-sample parameter values $\alpha_1=0.013$ and $\alpha_2=0.633$.}
	\label{fig:Eps}
\end{figure}

To assess the effect of tensor compression accuracy on the TROM solutions, we selected an out-of-sample parameter vector $\balpha=(0.013, 0.633)^T \notin \widehat{\cA}$ and computed the corresponding TT-TROM solution at $t={0.05, 0.5, 1}$. We set $n_{\Theta} = \widetilde{R}^{\Phi}_{D+1}$ for $\Theta \in { \Phi, \Psi }$, which corresponds to using the local reduced spaces of the largest possible dimension; see the bound in \eqref{eqn:nphipsi}. The resulting solution is displayed in Figure~\ref{fig:Eps} (left), where it is compared to the FOM solution. The HOSVD-TROM solutions were virtually identical to those provided by TT-TROM, so we do not display them separately.
We observe that TT-TROM solutions for $\epsilon=0.1$ exhibit significant inaccuracies, but the results improve rapidly for smaller values of $\epsilon$. For $\epsilon=10^{-2}$, the TT-TROM solutions closely align with the FOM solution. We do not present the TT-TROM solutions for $\epsilon={0.003, 10^{-3}}$ since they are visually indistinguishable from the FOM solution.

In Figure~\ref{fig:Eps} (right), we present the results of a similar study for CP-TROM, where the CP-rank is predetermined instead of targeting a specific accuracy $\epsilon$. The experiment covers CP-ranks of ${50, 100, 200, 300, 500}$ (further increasing the CP-rank leads to a very slow convergence of the ALS method~\cite{ReviewTensor} for computing CP LRTD). The corresponding compression accuracy of $\widetilde{\bPhi}$ and $\widetilde{\bPsi}$ is detailed in the lower part of Table~\ref{tab1}.

We note that the best compression accuracy achieved by CP-TROM for $\bPsi$ is only $2.82\cdot10^{-2}$ (for CP-rank=$500$). In Figure~\ref{fig:Eps} (right), CP-TROM solutions are displayed for CP-ranks of $50$ and $100$, with $n_\Phi = 5$ and $n_\Psi = 10$. For CP-rank=$200$, the dimensions of the local reduced spaces are chosen to be $n_\Phi = 7$ and $n_\Psi = 16$. This choice of $n_\Phi$ and $n_\Psi$ aligns with the TT-TROM local reduced space dimensions of comparable compression accuracy.

\begin{figure}[h]
	\begin{center}\footnotesize
	\begin{tabular}{cc}
	$n_\Phi = 5$, $n_\Psi = 10$ & $n_\Phi = 10$, $n_\Psi = 20$ \\
	\includegraphics[width=0.425\textwidth]{./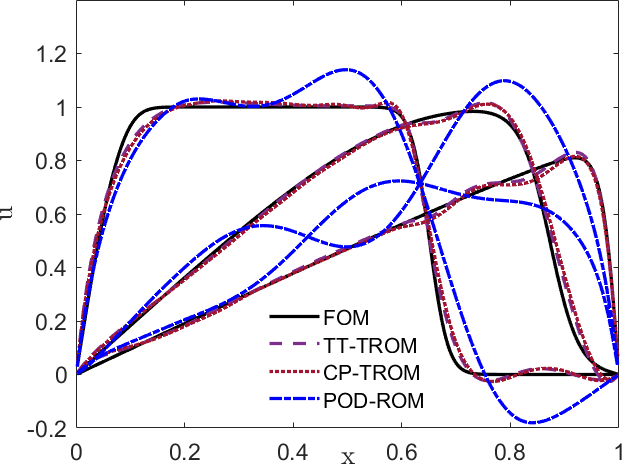} &
	\includegraphics[width=0.425\textwidth]{./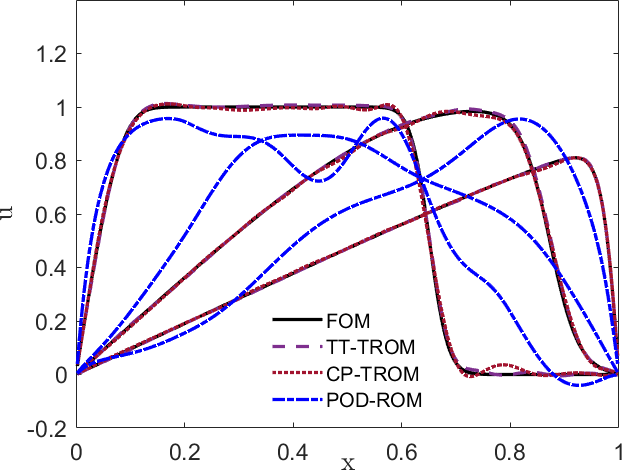} \\
	$n_\Phi = 15$, $n_\Psi = 30$ & POD--DEIM ROM for $\{ n_\Phi, n_\Psi \}$ \\
	\includegraphics[width=0.425\textwidth]{./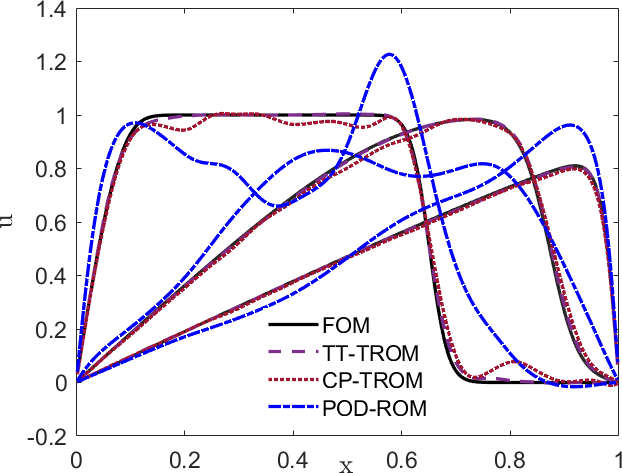} &
	\includegraphics[width=0.425\textwidth]{./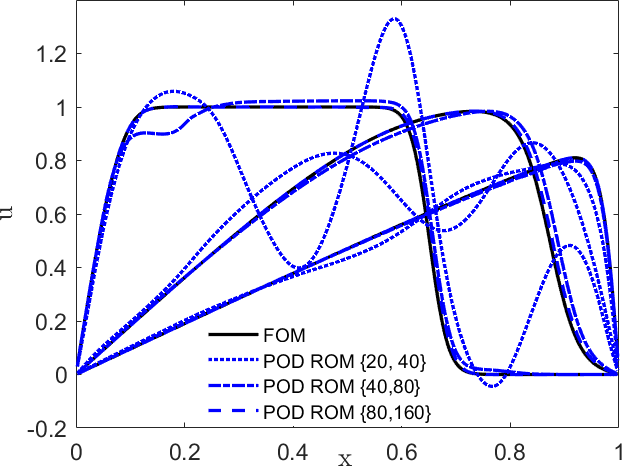} 		
	\end{tabular}
	\end{center}
	\caption{FOM and TROM solutions of the Burgers equation at times $t={0.05,0.5,1}$ 
	for out-of-sample parameter values $\alpha_1=0.013$ and $\alpha_2=0.633$, and increasing dimensions 
	of the local $\bu$- and $\blf$-spaces, $n_\Phi$ and $n_\Psi$, respectively.
	The right-bottom plot shows POD--DEIM ROM solutions for even higher 
	values of $n_\Phi$ and $n_\Psi$.}
	\label{fig:NM}
\end{figure}

\subsubsection{Out-of-sample TROM performance}

In this series of experiments we examine the performance of TROMs for out-of-sample parameter 
values depending on the dimensions of the local $\bu$- and $\blf$-spaces, denoted by $n_\Phi$ and $n_\Psi$,
respectively. We use the same sampling set $\widehat{\cA}$ as in Section~\ref{sec:tcompacc} and
the same out-of-sample parameter vextor $\balpha=(0.013, 0.633)^T \notin \widehat{\cA}$.
For both $\bPhi$ and $\bPsi$ tensors, we perform TT LRTD with $\eps=10^{-5}$ and CP LRTD with CP-rank=$200$ resulting in CP approximation accuracy of 
$\|\bPhi-\widetilde{\bPhi}_{\rm CP}\|_F = 1.3 \cdot 10^{-2}$ and 
$\|\bPsi-\widetilde{\bPsi}_{\rm CP}\|_F = 5.6 \cdot 10^{-2}$.

In Figure~\ref{fig:NM} we display both FOM and ROM solutions of the discretized Burgers equation at times 
$t={0.05,0.5,1}$, for increasing values of $n_\Phi$ and $n_\Psi$. We observe that already for
$n_\Phi = 5$ and $n_\Psi = 10$ both TT- and CP-TROM deliver reasonable approximations to the 
FOM solution. Increasing local reduced space dimensions to $n_\Phi = 10$ and $n_\Psi = 20$,
results in TROM solutions that almost match the FOM solutions. Remarkably, the further increase 
to $n_\Phi = 15$ and $n_\Psi = 30$ leads to CP-TROM solutions with some spurious oscillations, 
while TT-TROM solution provides a highly accurate approximation to the FOM solution. We attribute 
this degrade of CP-TROM to the failure of CP to approximate the snapshot tensors well for CP-rank=$200$ 
resulting in spurious higher order singular vectors in the bases $\rU_n$ and/or $\rY_n$. 
We display in all plots in Figure~\ref{fig:NM} solutions obtained with the conventional POD--DEIM ROM 
approach for the same dimensions $n_\Phi$ and $n_\Psi$. For all the dimensions considered 
POD--DEIM ROM fails to capture the behavior of the FOM solution. The right-bottom plot suggests 
that only for dimensions close to FOM size, it is possible for POD--DEIM ROM to ensure high quality
of the solution, which  defeats the purpose of model reduction.

\begin{figure}[h]
\begin{center}\footnotesize
	\begin{tabular}{cc}
	$E_\Phi$ & $E_\Psi$ \\
	\includegraphics[width=0.425\textwidth]{./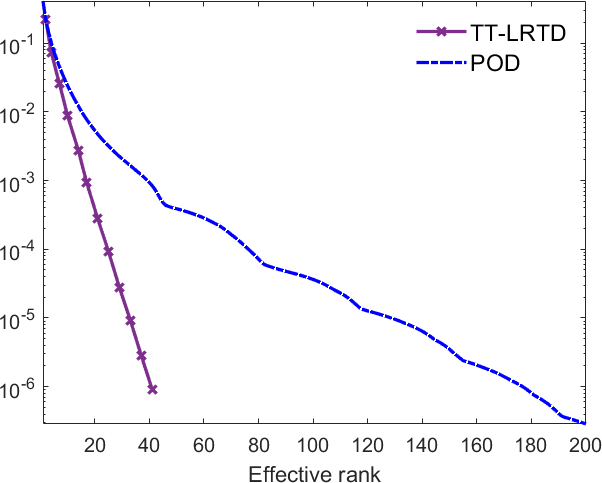} &
	\includegraphics[width=0.425\textwidth]{./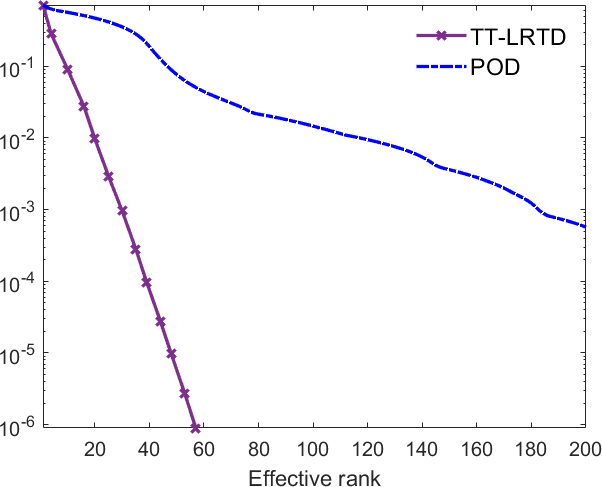} 		
	\end{tabular}
\end{center}
\caption{Relative errors of TT-LRTD snapshot tensor approximations \eqref{eqn:ELRTD} 
and POD \eqref{eqn:ESVD} versus their effective ranks.}
\label{fig:SVD}
\end{figure}

The dramatic gain in the ROM solution quality offered by the TROM compared to POD--DEIM ROM, 
as observed in Figure~\ref{fig:NM}, can be understood by exploring the relative approximation error of 
LRTD versus POD in terms of their \emph{effective rank}.
The effective ranks for TT- and HOSVD-LRTD are defined as $\widetilde R_{D+1}$ from \eqref{eqn:TTv}
and $\widetilde{N}$ from \eqref{eqn:TDv} respectively. For the truncated SVD of POD--DEIM ROM the 
effective rank is just the number of singular values/vectors kept in $\widetilde{\Theta}_{(1)}$, 
$\Theta \in \{ \Phi, \Psi \}$.

For LRTD employed in TROM we define the 
relative approximation errors for the $\bu$- and $\blf$- snapshot tensors as
\begin{equation}
E_\Phi = { \| \bPhi - \widetilde{\bPhi} \|_F }/{ \| \bPhi \|_F }, \quad
E_\Psi = { \| \bPsi - \widetilde{\bPsi} \|_F }/{ \| \bPsi \|_F }
\label{eqn:ELRTD}
\end{equation}
respectively. For the truncated SVD employed by POD--DEIM ROM the relative errors are
\begin{equation}
E_\Phi = { \| \Phi_{(1)} - \widetilde{\Phi}_{(1)} \|_F }/{ \| \Phi_{(1)} \|_F }, \quad
E_\Psi = /{ \| \Psi_{(1)} - \widetilde{\Psi}_{(1)} \|_F }/{ \| \Psi_{(1)} \|_F },
\label{eqn:ESVD}
\end{equation}
where $\Theta_{(1)}$ are the 1-mode unfolding matrices of the snapshot tensors $\bTheta$,
and $\widetilde{\Theta}_{(1)}$ are their truncated SVDs for both $\Theta \in \{ \Phi, \Psi \}$.
In practice, TT and HOSVD effective ranks 
are essentially the same, so we only report the errors \eqref{eqn:ELRTD} for TT-LRTD. 

Figure~\ref{fig:SVD} shows that for the same effective rank TT-LRTD provides a significantly
smaller relative error compared to the truncated SVD for both $\bu$- and $\blf$-snapshot
approximations. Moreover, the accuracy gain is especially pronounced for approximating
$\blf$-snapshots.

\begin{figure}[h]
	\begin{center}\footnotesize
	\begin{tabular}{cc}
	TT-TROM & CP-TROM \\
	\includegraphics[width=0.4\textwidth]{./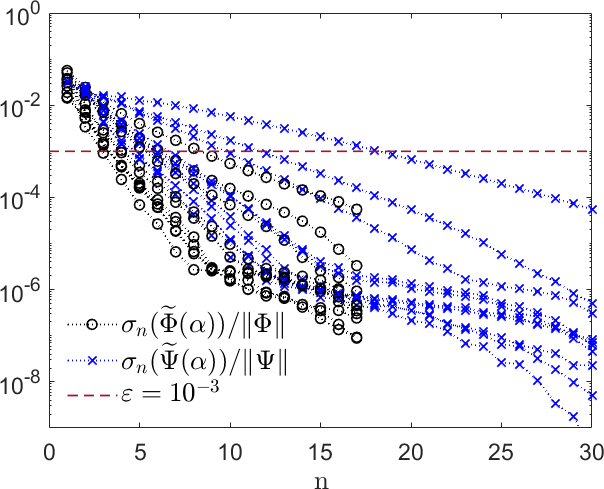} &
	\includegraphics[width=0.4\textwidth]{./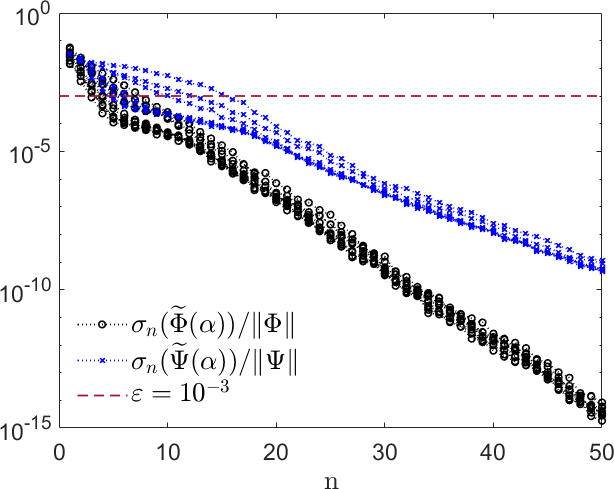} 		
	\end{tabular}
	\end{center}
	\caption{Scaled singular values of local spansot matrices $\widetilde{\Phi}(\balpha)$ and 
	$\widetilde{\Psi}(\balpha)$ defined in \eqref{eqn:extractbt} for $10$ random realizations of 
	out-of-sample parameters $\balpha \notin \widehat{\cA}$ for TT-TROM with 
	$\varepsilon=10^{-3}$ and CP-TROM with CP-rank=$200$, 
	showing $\sigma_n(\widetilde{\Theta}(\balpha))$, for $n=1,\ldots,50$, 
	$\widetilde \Theta \in \{ \widetilde \Phi, \widetilde \Psi \}$.}
	\label{fig:locSVD}
\end{figure}

Results in Figure~\ref{fig:NM} suggest that TT-TROM solutions are reasonably accurate even for the 
local dimensions lower than those given by the last TT rank: $n_{\Theta}<\widetilde R_{D+1}^{\Theta}$,  
$\Theta \in \{ \Phi, \Psi \}$. This behavior is explained by the left plot in Figure~\ref{fig:locSVD},
which shows the scaled singular values of the local snapshot matrices $\widetilde{\bPhi}(\balpha)$ 
and $\widetilde{\bPsi}(\balpha)$ for several random realizations of out-of-sample parameters 
$\balpha \notin \widehat{\cA}$. Note that $\sigma(\widetilde\bPhi(\balpha))$ and $\sigma(\widetilde\bPsi(\balpha))$ 
appear on the right-hand side of the TROM representation estimate \eqref{eqn:est1} 
and DEIM interpolation estimate~\eqref{eqn:est2}. While setting $n_{\Theta}= \widetilde R_{D+1}^{\Theta}$ 
makes the corresponding terms in \eqref{eqn:est1} and \eqref{eqn:est2} vanish, for smaller (but not too small) 
local reduced space dimensions they are dominated by approximation terms and so do not affect the representation 
power of the local reduced spaces.  

\begin{table}[h]
\caption{TROM errors in $H^1$-norm for refined parameter space grid. $\cA_{100}$ is a set of 100 pairs of 
randomly drawn out-of-sample parameters.}
\label{tab2}
\begin{center}\small
\begin{tabular}{|l|c|c|c|c|c|}
\hline
\multicolumn{1}{|c|}{$K_1\times K_2$} & $2 \times 4$ & $4 \times 8$ & $8 \times 16$ & 16x32 &32x64 \\
\hline		
$\frac{\sum\limits_{\boldsymbol{\alpha}\in\cA_{100}}\int\limits_{0.5}^1\int\limits_0^1
|(u_{\rm true}-u_{\rm pod})_x|^2dx\,dt}
{\sum\limits_{\boldsymbol{\alpha}\in\cA_{100}}\int\limits_{0.5}^1\int\limits_0^1
|(u_{\rm true})_x|^2dx\,dt}$ & 0.244 & 0.104 & 0.035 & 0.025 & 0.026 \\[1ex]
\hline
$\frac{\max\limits_{\boldsymbol{\alpha}\in\cA_{100}}\int\limits_{0.5}^1\int\limits_0^1
|(u_{\rm true}-u_{\rm pod})_x|^2dx\,dt}
{\max\limits_{\boldsymbol{\alpha}\in\cA_{100}}\int\limits_{0.5}^1\int\limits_0^1
|(u_{\rm true})_x|^2dx\,dt}$ &  0.442 & 0.204  & 0.060 & 0.030 & 0.033 \\
\hline 	
\end{tabular}
\end{center}
\end{table}

Finally we examine the dependence of TROM solution accuracy  on the sampling refinement for the parameter space.
Table~\ref{tab2} shows  errors in the $L^2(H^1)$ norm, which is a natural norm for parabolic problems.  
The error is  averaged  and maximal over 100 pairs of  out-of-sample parameters randomly drawn. 
We see that the error decays with refining the parameter grid  as predicted by the representation estimate 
with saturation achieved for $16 \times 32$. 

\subsection{Parameterized 2D Allen--Cahn equation}
\label{sec:AC}

The second example we consider is a 2D Allen--Cahn equation, a phase-field model of a phase separation process 
in $\Omega$ with a transition between order and disorder states. The model characterizes state of matter at 
$\bx\in\Omega$ by a smooth indicator function $u(t,\bx)$, solving 
\begin{equation}
u_t = \alpha_1^2 \Delta u - f(u), \quad \text{for}~\bx\in(0,1)^2,~t\in(0,T), 
\label{eqn:AC}
\end{equation}
with zero Neumann boundary conditions and the initial condition described below.
The nonlinear term is $f(u)=F'(u)$, where $F(u)=u^2(1-u)^2+\frac{\alpha_2}{10}(u^4-\frac12u)$ 
is a double-well Ginzburg–Landau potential to allow for phase separation. The parameter 
$\alpha_1 >0$ is the characteristic width of the transition region between the two phases, 
and $\alpha_2 \in [0,1]$ defines energy levels of pure phases with $\alpha_2 > 0$ corresponding 
to an asymmetric potential. 

To obtain a dynamical system of the form \eqref{eqn:GenericPDE}, we discretize \eqref{eqn:AC}
using a standard second-order finite difference scheme on a uniform grid with $M=150^2$ nodes and 
mesh size $h=1/\sqrt{M}$. This yields $\rA_{\balpha} \in \mathbb{R}^{M \times M}$ that depends on 
$\alpha_1$ only. The non-linear term $ \blf_{\balpha}(\bu) = F'(\bu)$
 depends on $\balpha$ and is being computed with the powers of $\bu \in \mathbb{R}^M$
taken entrywise.

To generate FOM snapshots for \eqref{eqn:AC} we apply a semi-implicit second-order time stepping 
scheme with uniform time step $\Delta t$: for each $\widehat{\balpha}_k \in \widehat{\cA}$,
given $\bphi_j(\widehat{\balpha}_k)$ and $\bphi_{j-1}(\widehat{\balpha}_k)$ in $\mathbb{R}^M$, 
find $\bphi_{j+1}(\widehat{\balpha}_k)$ satisfying
\begin{equation}
\begin{split}
\frac{3 \bphi_{j+1}(\widehat{\balpha}_k) - 4 \bphi_j(\widehat{\balpha}_k) 
	+ \bphi_{j-1}(\widehat{\balpha}_k) }{2 \Delta t}  + 
\beta_s \left( \bphi_{j+1}(\widehat{\balpha}_k) - 2 \bphi_j(\widehat{\balpha}_k) 
+ \bphi_{j-1}(\widehat{\balpha}_k) \right) = \\
\rA_{\widehat{\balpha}_k} \bphi_{j+1}(\widehat{\balpha}_k) 
- \blf_{\widehat{\balpha}} \left( 2 \bphi_j(\widehat{\balpha}_k) - \bphi_{j-1}(\widehat{\balpha}_k) \right), 
\end{split}
\label{eqn:ACtstep}
\end{equation}
for $j = 1,2,\ldots,N$, $\Delta t= T/N$, $T=20$, $N=200$ and $k=1,\ldots,K$. Here $\beta_s>0$ 
is the stabilization parameter~ \cite{shen2010numerical} to allow the explicit treatment 
of the non-linear term. 

\begin{figure}[h]
	\begin{center}\footnotesize
	\begin{tabular}{ccc}
	$p_{\rm b} = 0.50$ & $p_{\rm b} = 0.51$ & $p_{\rm b} = 0.52$ \\
		\includegraphics[width=0.31\textwidth]{./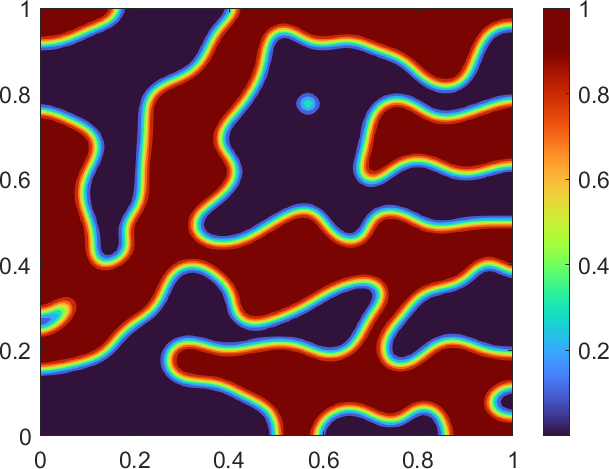} &	
		\includegraphics[width=0.31\textwidth]{./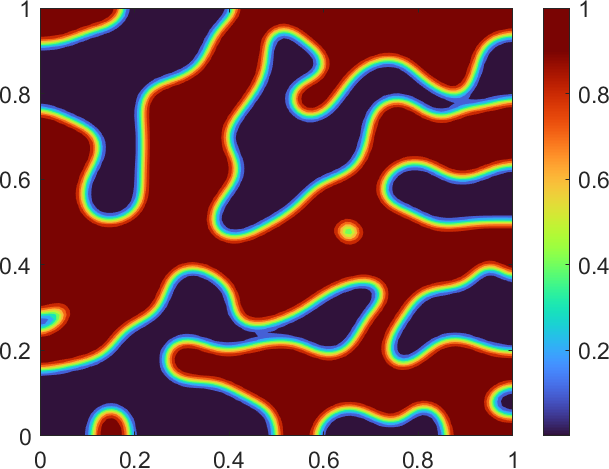} &
		\includegraphics[width=0.31\textwidth]{./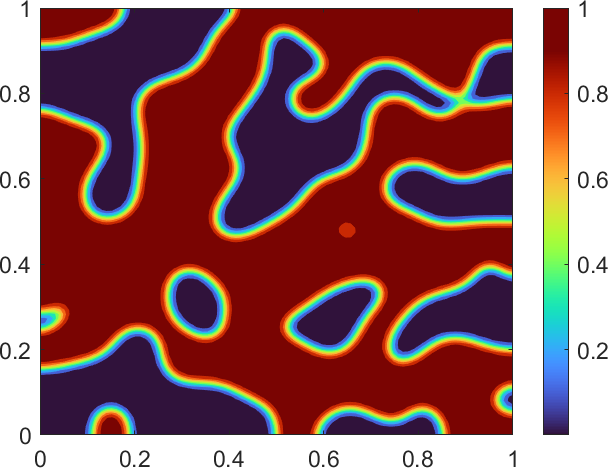} 		
	\end{tabular}
	\end{center}
	\caption{Initial conditions $\bphi_0(\widehat{\balpha})$ for three different values of 
	$p_{\rm b} \in \{0.50, 0.51, 0.52\}$.}
	\label{fig:in}
\end{figure}

The first step of \eqref{eqn:ACtstep} is performed using BDF1 for the initial
condition $\bphi_0(\widehat{\balpha}_k)$ with a varying ratio of the areas occupied by each 
state, as shown in Figure~\ref{fig:in}. The initial snapshots $\bphi_0$ are computed themselves
with FOM simulations of the discretized equation \eqref{eqn:AC} on $t \in (0,1)$ with 
$\alpha_1=0.01$ and $\alpha_2=0$ from the initial random Bernoulli distribution with probability of 
$u = 1$ equal to $p_{\rm b} \in \{0.50, 0.51, 0.52\}$, respectively, at each of $M$ nodes of the spatial 
discretization grid. 
Thus, we consider a three-parameter system, i.e., $D = 3$, 
with the parameter domain $\cA = [0.01, 0.025] \times [0,1] \times [0.5,0.52]$.

Once the $\bu$-snapshots are calculated via \eqref{eqn:ACtstep}, $\blf$-snapshots are simply 
\begin{equation}
\bpsi_j(\widehat{\balpha}_k) = - \blf_{\widehat{\balpha}} \left( 2 \bphi_j(\widehat{\balpha}_k) 
- \bphi_{j-1}(\widehat{\balpha}_k) \right), \quad j = 1,2,\ldots,N, \quad k=1,\ldots,K.
\end{equation}
We use a grid of $8$ log-uniformly distributed $\widehat{\alpha}_1^j$ values in $[0.01,0.025]$, 
three values of $\widehat{\alpha}_2^j \in\{0,0.15,0.3\} $ and three values of 
$\widehat{\alpha}_3^j \in\{0.50,0.51,0.52\}$ to define the sampling set $\widehat{\cA}$.

\begin{table}[h]
\label{tab4}
\caption{Compression ranks 
$[\widetilde R_1^\Theta, \widetilde R_2^\Theta, \widetilde R_3^\Theta, \widetilde R_4^\Theta ]$,
$\Theta \in \{ \Phi, \Psi\}$, for TT-TROM versus compression accuracy $\eps$ 
and the corresponding compression factors.}
	\vspace{-3ex}
	\footnotesize
	\begin{center}
	\begin{tabular}{|l|c|c|c|c|c|}
	\hline
	\multicolumn{1}{|c|}{${\eps}$} & 0.1 & 0.01 &$10^{-3}$& $10^{-4}$ & $10^{-5}$ \\
	\hline		
$\widetilde{\bPhi}$ TT-ranks & 
$[\textbf{13},6,3,\textbf{2}]$  & 
$[\textbf{79},35,13,\textbf{6}]$  & 
$[\textbf{221},90,35,\textbf{15}]$  & 
$[\textbf{442},155,63,\textbf{26}]$  & 
$[\textbf{741},229,94,\textbf{38}]$  \\
$\widetilde{\bPsi}$ TT-ranks & 
$[\textbf{120},46,16,\textbf{8}]$ & 
$[\textbf{340},129,47,\textbf{20}]$ & 
$[\textbf{646},216,87,\textbf{35}]$ & 
$[\textbf{1013},307,127,\textbf{50}]$ & 
$[\textbf{1405},399,165,\textbf{65}]$ \\
$\mbox{CF}(\bPhi)$ & $4.629\cdot10^5$ & $1.364\cdot10^5$ & 1902 & 555 & 226 \\
$\mbox{CF}(\bPsi)$ & 6921 & 870  & 274 & 123 &69 \\
	\hline				
	\end{tabular}
	\end{center}
\end{table}

In the first experiment we study in-sample representation capacity of TT-TROM and compare it to that of 
POD--DEIM ROM. In Figure~\ref{fig:in-sample} we display the FOM, TT-TROM and POD--DEIM ROM
solutions of discretized Allen--Cahn equation at the terminal time $t = T = 20$ for two in-sample 
parameter vectors. The TT-TROM solution was computed for tensor compression accuracy $\eps=10^{-5}$.
Decreasing compression accuracy to $\eps=10^{-3}$ did not lead to a visual difference in TT-TROM solutions. 
The corresponding tensor ranks and compression factors for different values of $\eps$ are summarized in
Table~\ref{tab4}. We observe that while TT-TROM with the modest local reduced space dimensions 
$ n_{\Phi}= n_{\Psi}=20$ predicts the pattern evolution very well, the conventional POD--DEIM ROM 
is inaccurate.  
  
\begin{figure}[h]
	\begin{center}
	\begin{tabular}{ccc}\footnotesize
	FOM & TT-TROM & POD-DEIM ROM \\
		\includegraphics[width=0.305\textwidth]{./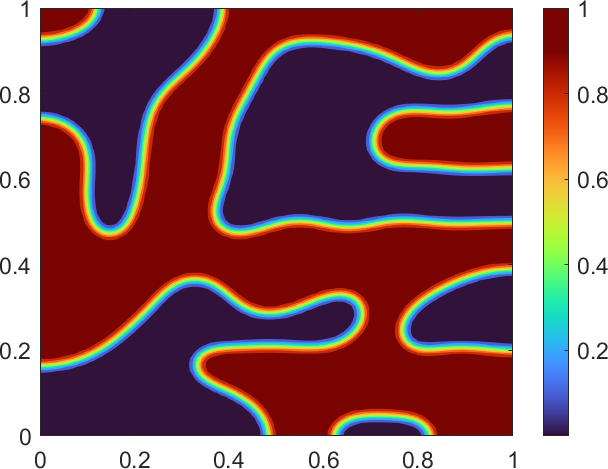} &
		\includegraphics[width=0.305\textwidth]{./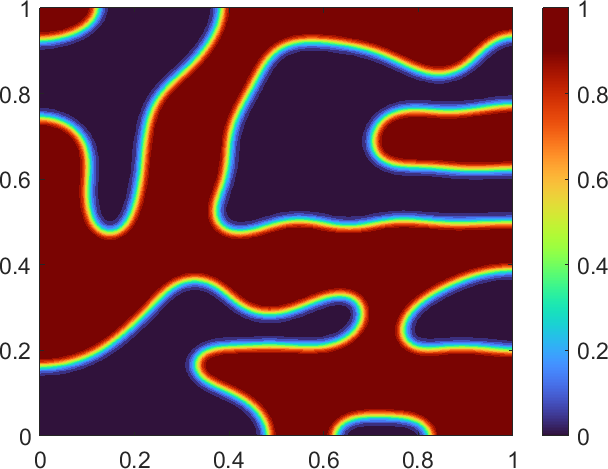} &
		\includegraphics[width=0.300\textwidth]{./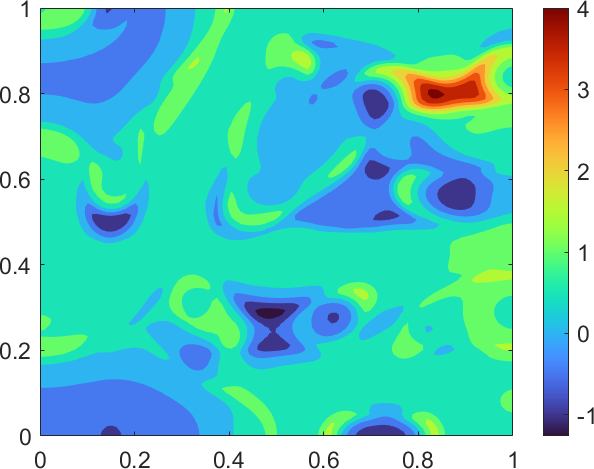} \\
		\includegraphics[width=0.305\textwidth]{./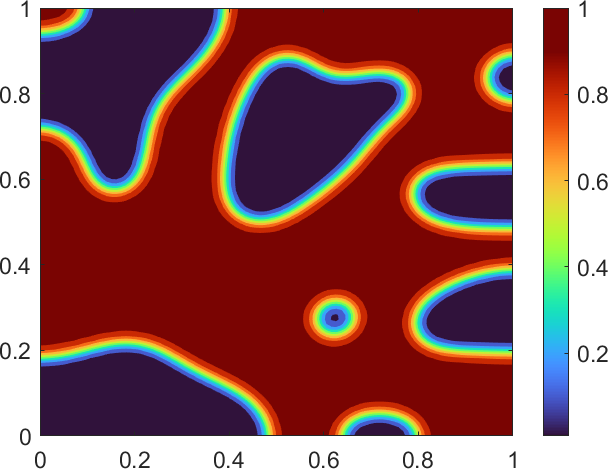} &
		\includegraphics[width=0.305\textwidth]{./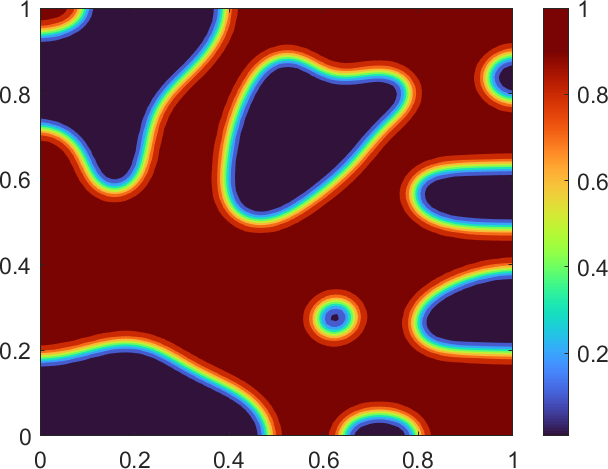} &
		\includegraphics[width=0.310\textwidth]{./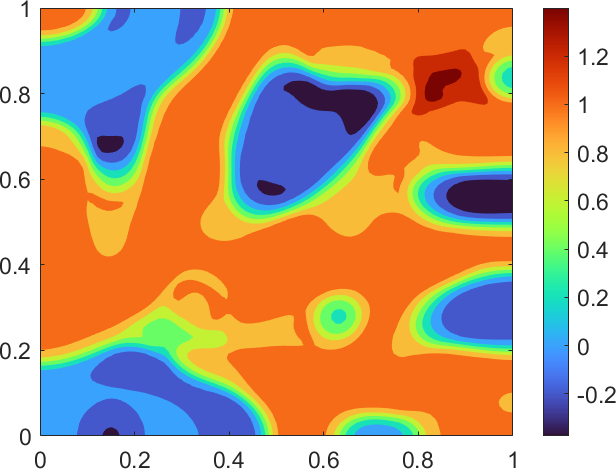} 
	\end{tabular}
	\end{center}
	\caption{FOM, TT-TROM ann POD-DEIM ROM solutions of the Allen--Cahn equation
	at $t = T = 20$ for two \textbf{in-sample} parameter vectors:
	$\balpha = (0.01, 0, 0.5)^T$ (top row) and $\balpha = (0.013, 0.15, 0.52)^T$ (bottom row).}
	\label{fig:in-sample}
\end{figure}

Next, we repeat the experiment for out-of-sample parameter values. We disply in Figure~\ref{fig:out-sample} 
the FOM, TT-TROM and POD--DEIM ROM solutions of discretized Allen--Cahn equation at the terminal time 
$t = T = 20$ for two out-of-sample parameter vectors. We observe again an excellent prediction offered by 
TT-TROM, although the relative $L^2(0,T,L^2(\Omega))$ error increased from about $10^{-5}$ for 
the in-sample case to about $10^{-2}$ for the out-of-sample case. Similarly, POD--DEIM ROM once again
fails to accurately predict the solutions.

\begin{figure}[h]
	\begin{center}\footnotesize
	\begin{tabular}{ccc}
	FOM & TT-TROM & POD-DEIM ROM \\
			\includegraphics[width=0.305\textwidth]{./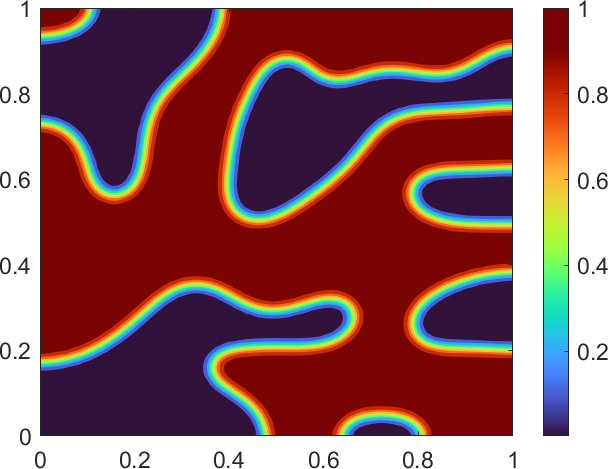} &
			\includegraphics[width=0.305\textwidth]{./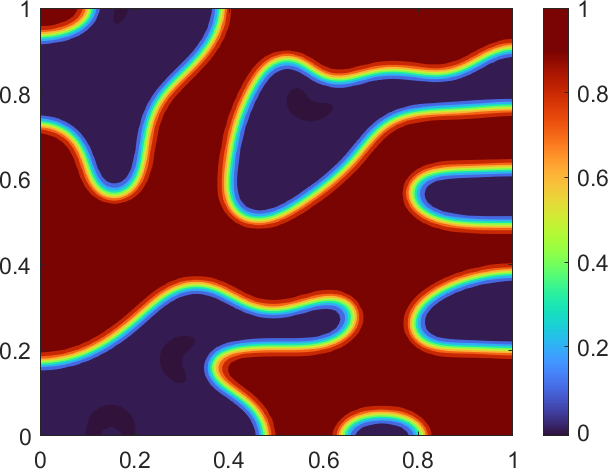} &
			\includegraphics[width=0.305\textwidth]{./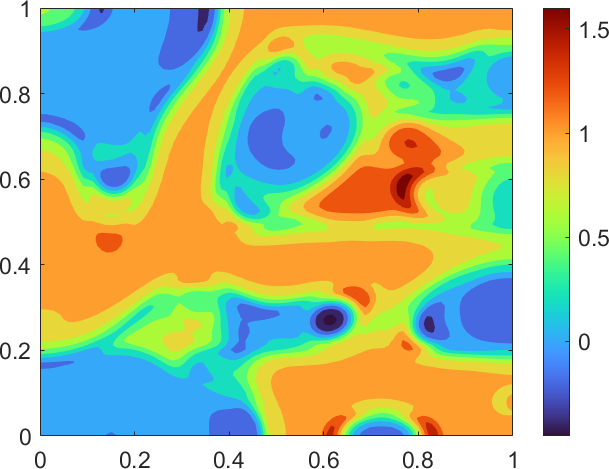} \\
			\includegraphics[width=0.305\textwidth]{./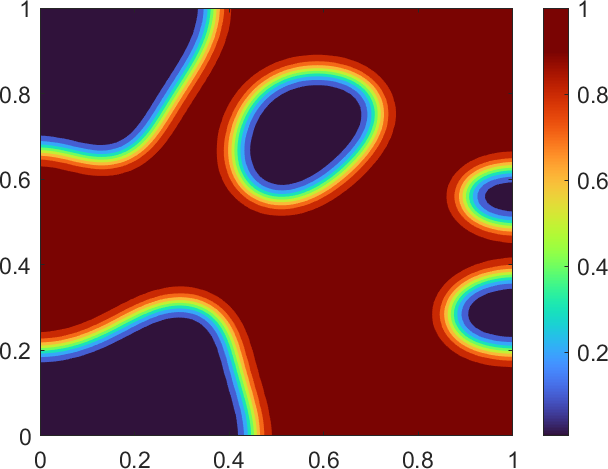} &
			\includegraphics[width=0.305\textwidth]{./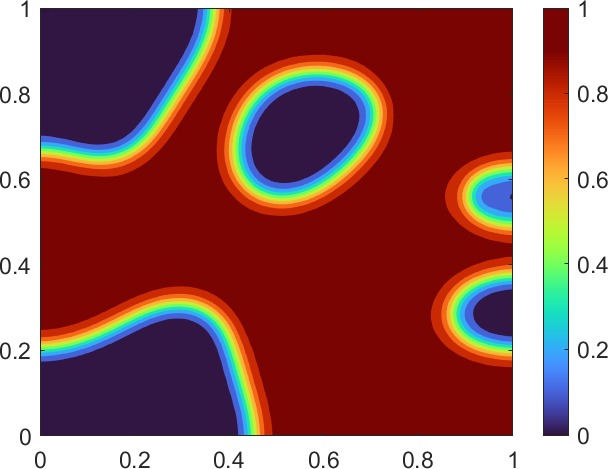} &
			\includegraphics[width=0.305\textwidth]{./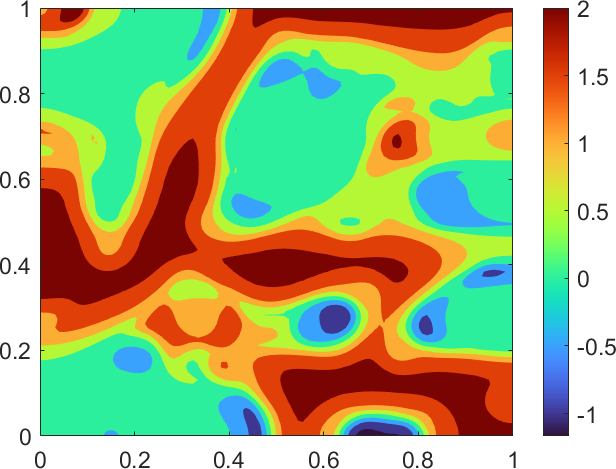}
	\end{tabular}
	\end{center}
	\caption{FOM, TT-TROM and POD-DEIM ROM solutions of the Allen--Cahn equation
	at $t = T = 20$ for two \textbf{out-of-sample} parameter vectors:
	$\balpha = (0.012, 0.1, 0.51)^T$ (top row) and $\balpha = (0.02, 0.2, 0.51)^T$ (bottom row).}
	\label{fig:out-sample}
\end{figure}

\begin{figure}[h]
	\begin{center}\footnotesize
	\begin{tabular}{ccc}
	$\eps = 10^{-1}$ & $\eps = 10^{-2}$ & $\eps = 10^{-3}$ \\
	\includegraphics[width=0.305\textwidth]{./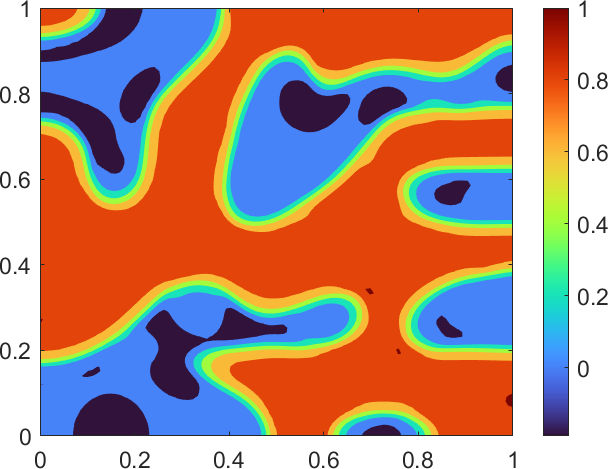} &
	\includegraphics[width=0.305\textwidth]{./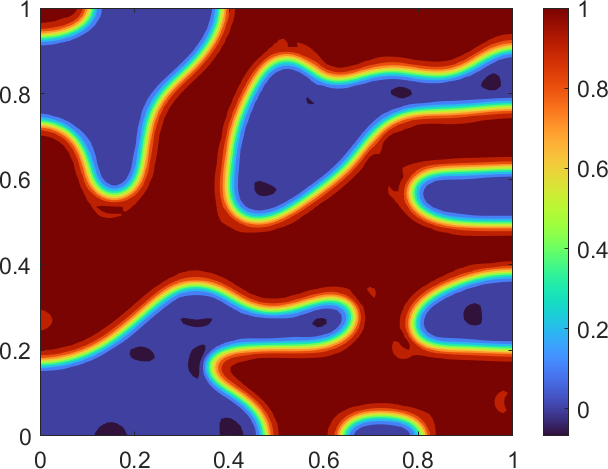} &
	\includegraphics[width=0.305\textwidth]{./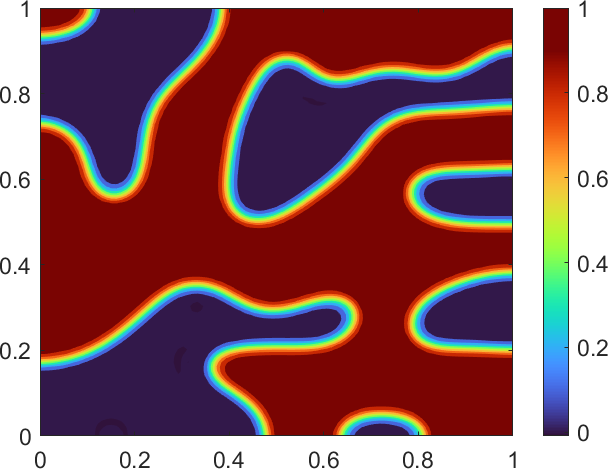}
	\end{tabular} 		
	\end{center}
	\caption{TT-TROM solutions of the Allen--Cahn equation for variying tensor compression 
	accuracy $\eps$.}
	\label{fig:TROMeps}
\end{figure}

Finally, we examine the effect of varying tensor compression accuracy $\eps$ on TROM solution accuracy and 
compression ranks. In Figure~\ref{fig:TROMeps} we display the TT-TROM solutions of discretized Allen--Cahn 
equation at $t = T = 20$ for an out-of-sample parameter vector $\balpha = (0.012, 0.1, 0.51)^T$ for the
three different values of $\eps = 10^{-1}, 10^{-2} , 10^{-3}$ with local reduced space dimensions equal to
the effective ranks, i.e., $n_{\Theta} = \widetilde{R}^\Theta_4$ for both $\Theta \in \{ \Phi, \Psi \}$.
In Table~\ref{tab4} we observe a large ratio of the first to last TT-ranks which is even higher than that
for the example in Section~\ref{sec:burgers}. This explains a vast accuracy gain offered by the TT-TROM compared 
to the conventional POD--DEIM ROM. Indeed, POD--DEIM ROM performs poorly for this example with 
solutions being inaccurate even for larger local spaces dimensions, as demonstrated in Figure~\ref{fig:POD}.

\begin{figure}[h]
	\begin{center}\footnotesize
	\begin{tabular}{ccc}
	$n_\Phi = 10, n_\Psi = 14$ & $n_\Phi = 20, n_\Psi = 35$ & $n_\Phi = 30, n_\Psi = 52$ \\
	\includegraphics[width=0.305\textwidth]{./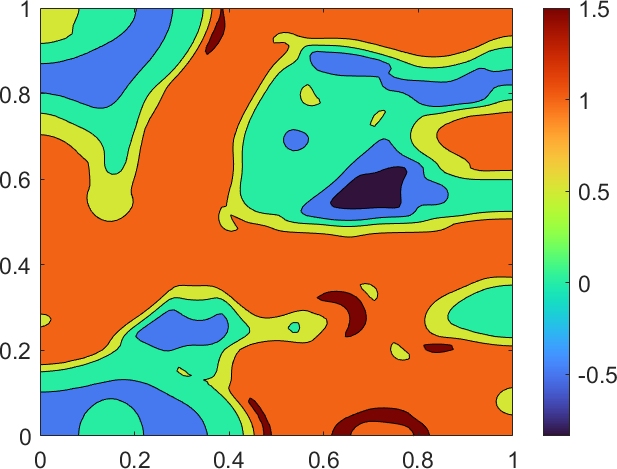} &	
	\includegraphics[width=0.305\textwidth]{./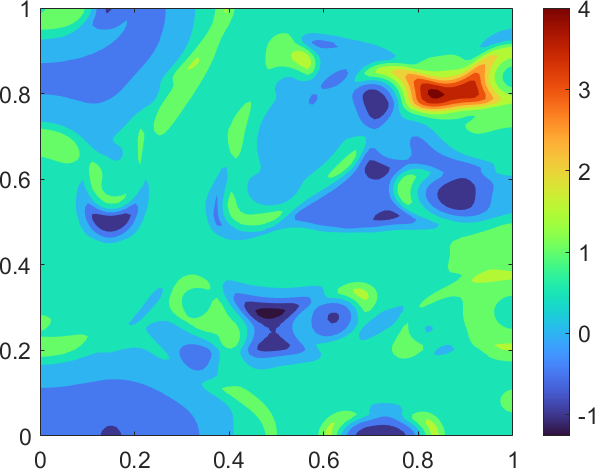} &
	\includegraphics[width=0.305\textwidth]{./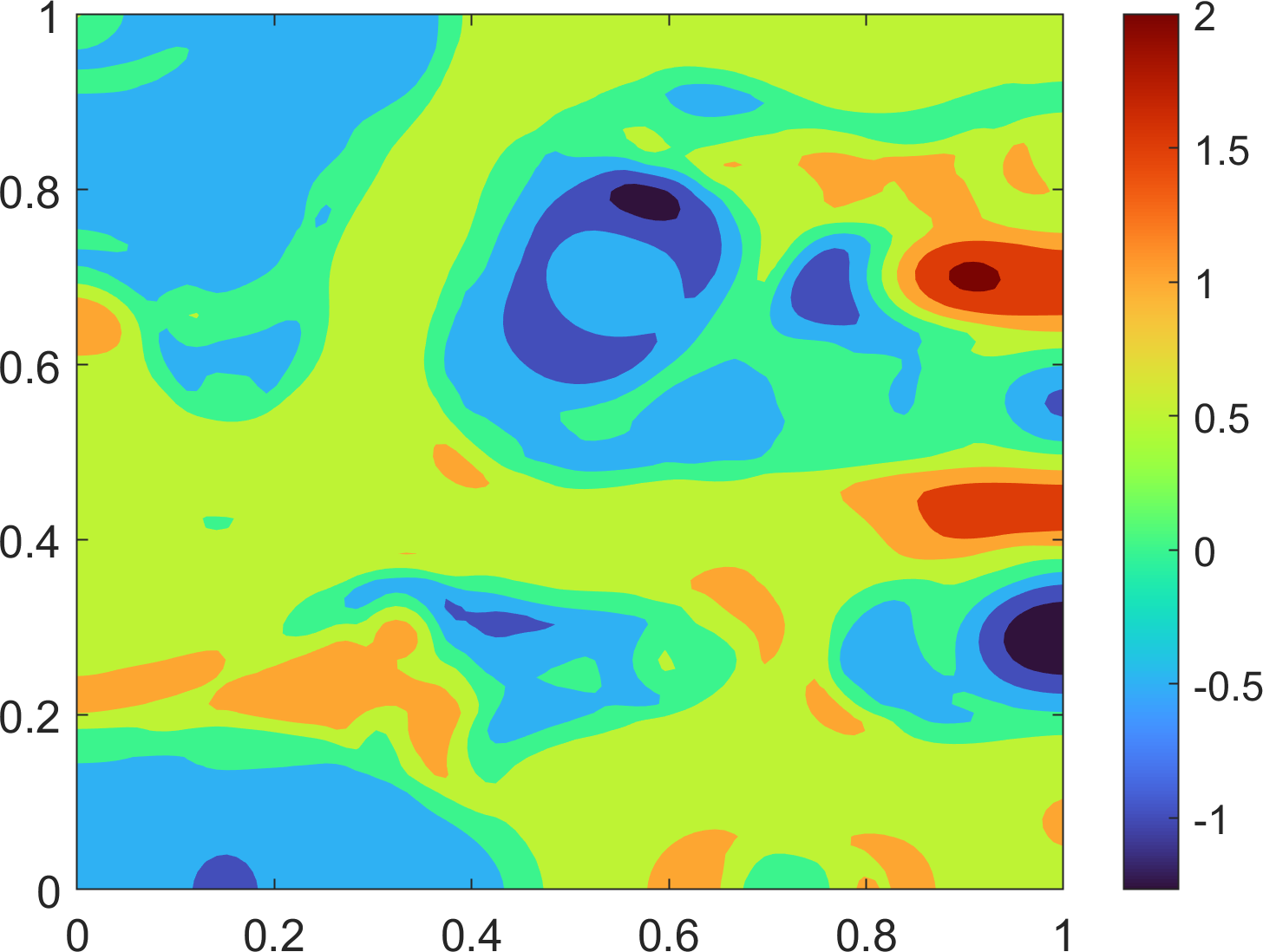} \\	
	$n_\Phi = 10, n_\Psi = 15$ & $n_\Phi = 20, n_\Psi = 35$ & $n_\Phi = 30, n_\Psi = 50$ \\
	\includegraphics[width=0.305\textwidth]{./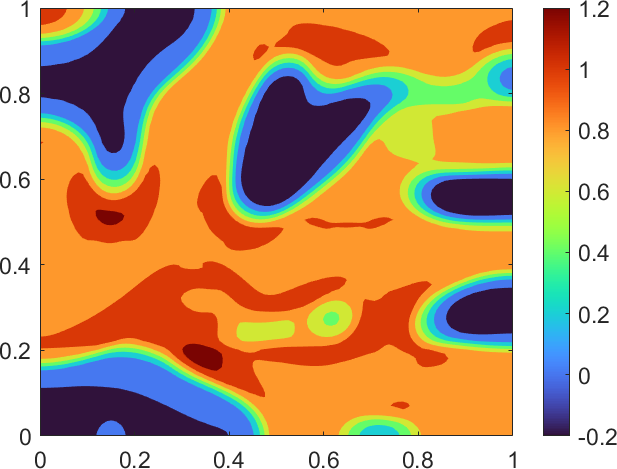} &	
	\includegraphics[width=0.305\textwidth]{./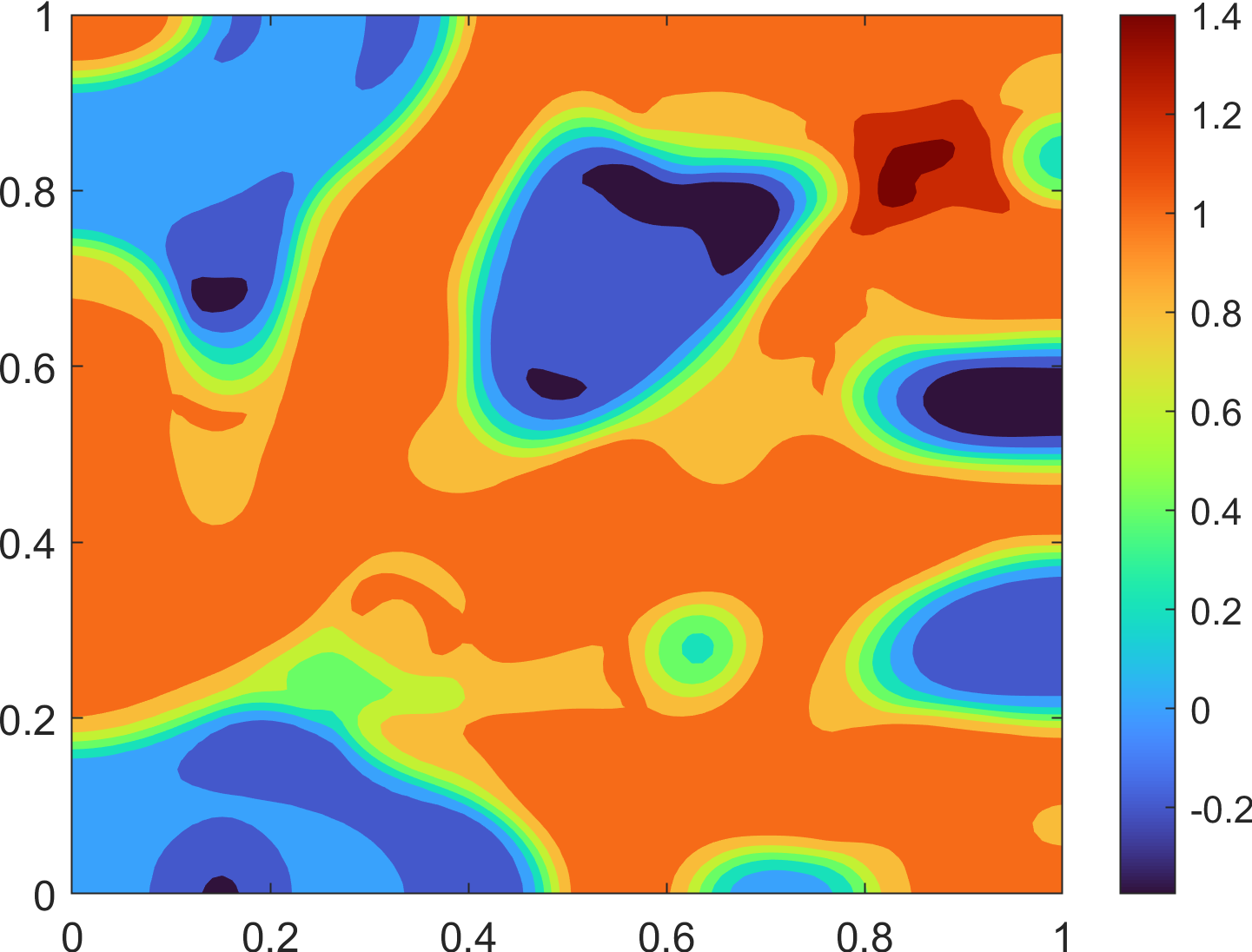} &
	\includegraphics[width=0.305\textwidth]{./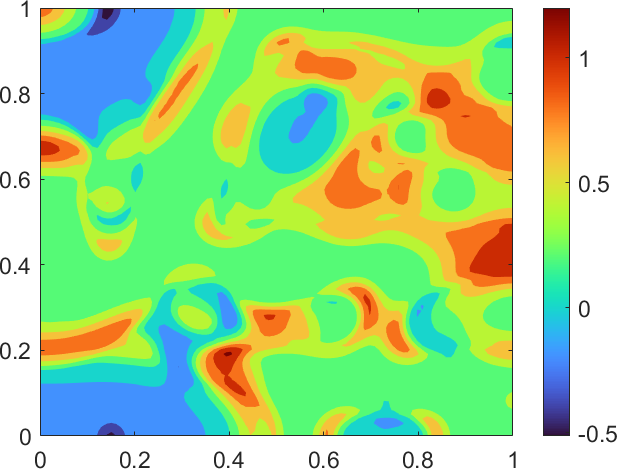} 		
	\end{tabular} 
	\end{center}
	\caption{POD-DEIM ROM solutions of the Allen--Cahn equation
	at $t = T = 20$ for two parameter vectors:
	$\balpha = (0.01, 0, 0.50)^T$ (top row) and $\balpha = (0.013, 0.15, 0.52)^T$ (bottom row) for
	increasing dimensions $n_\Phi$ and $n_\Psi$ of local reduced spaces (from left to right). 
	Some combinations of parameter values and local reduced space dimensions lead to the blow-up 
	of the numerical solution, e.g., for $\balpha = (0.013, 0.15, 0.52)^T$ and $ n_{\Phi}=10$, 
	$n_{\Psi}=15$.}
	\label{fig:POD}
\end{figure}

The computational times for the TT-TROM online stage in this Allen-Cahn equation example were mainly determined by the tensor compression ranks, as indicated in Table~\ref{tab4}. For tensor compression accuracies $\epsilon\in{10^{-2}, 10^{-3}, 10^{-4}}$, the average elapsed computational times on a laptop, calculated over multiple runs of the TT-TROM, were approximately 0.012 sec., 0.021 sec., and 0.040 sec., respectively. These times encompass steps 1 to 3 of the online stage in Algorithm~\ref{Alg1}, as well as the time required for integrating the projected system.  This can be compared to an average time of 6 sec. required by the FOM for a single value of $\balpha$.

\section{Conclusions}

We introduced a Galerkin-type model order reduction framework for non-linear parametric dynamical
systems that utilizes LRTD in place of POD for both projection and hyper-reduction steps. 
The LRTD is applied to find ``universal'' reduced spaces representing all observed snapshots 
\emph{and} it is also used for finding ``local'' parameter-specific reduced subspaces of these 
larger universal spaces. If HOSVD or TT algorithms are employed to compute LRTD of snapshot tensors, 
then the universal spaces coincide with the POD spaces for the matrices of all observed snapshots. 
In this case, the proposed TROM can be also thought of as a tensor modification of the conventional 
POD--DEIM model reduction approach that benefits from the intrinsic tensor structure of a parametric 
system in several ways: (i) it provides means to find local spaces; (ii) it allows interpolation in the parameter 
domain directly in the reduced order spaces and thus enables efficient handling of parameters  
outside of the training set; (iii) it admits a rigorous analysis of the representation power of the reduced 
spaces for general parameter values.  

We assessed the performance of three LRTD variants for model order reduction, based on CP, HOSVD and TT
tensor formats. While TT was found to have a slight edge over HOSVD in terms of compression rates for 
the examples considered, the CP variant is in general more time consuming to compute and delivers worse
approximation quality. Another variation considered is the approach to hyper-reduction in a 
two-stage setting. Out of the two variants we prefer the one with DEIM at the offline stage 
and local least squares at the online stage, since it performs similarly to the approach with DEIM on 
both stages, but also admits interpolation estimate for the local basis with a bound that is independent 
of parameters.   

We note that for large scale FOMs and higher-dimensional parameter spaces, sampling full 
snapshot tensors may become prohibitively expensive due to the exponential increase of the number 
of snapshots as a function of the paratemeter space dimension. A promising approach to decrease the 
associated offline costs is the use of low-rank tensor interpolation or completion for finding LRTD from a 
sparse sampling of parameter domain. This should decouple the required number of parameter 
samples from the dimension of parameter space, assuming a certain degree of regularity in the dependence of dynamical system solutions on the parameters. Preliminary results suggesting feasibility and efficiency of such an 
approach will be reported in a forthcoming paper.

\section*{Acknowledgments}
 M.O. was supported in  part by the U.S. National Science Foundation under awards DMS-2011444 and DMS-1953535.
 A.M. and M.O. were supported by the U.S. National Science Foundation under award DMS-2309197.
This material is based upon research supported in part by the U.S. Office of Naval Research 
under award number N00014-21-1-2370 to A.M. 

\bibliographystyle{siamplain}
\bibliography{literatur}{}

\begin{thebibliography}{10}

\bibitem{amsallem2008interpolation}
{\sc D.~Amsallem and C.~Farhat}, {\em Interpolation method for adapting
  reduced-order models and application to aeroelasticity}, AIAA journal, 46
  (2008), pp.~1803--1813.

\bibitem{amsallem2012nonlinear}
{\sc D.~Amsallem, M.~J. Zahr, and C.~Farhat}, {\em Nonlinear model order
  reduction based on local reduced-order bases}, International Journal for
  Numerical Methods in Engineering, 92 (2012), pp.~891--916.

\bibitem{antoulas2000survey}
{\sc A.~C. Antoulas, D.~C. Sorensen, and S.~Gugercin}, {\em A survey of model
  reduction methods for large-scale systems}, tech. report, 2000.

\bibitem{baur2011interpolatory}
{\sc U.~Baur, C.~Beattie, P.~Benner, and S.~Gugercin}, {\em Interpolatory
  projection methods for parameterized model reduction}, SIAM Journal on
  Scientific Computing, 33 (2011), pp.~2489--2518.

\bibitem{benner2016low}
{\sc P.~Benner, S.~Dolgov, A.~Onwunta, and M.~Stoll}, {\em Low-rank solvers for
  unsteady {Stokes}--{Brinkman} optimal control problem with random data},
  Computer Methods in Applied Mechanics and Engineering, 304 (2016),
  pp.~26--54.

\bibitem{benner2017solving}
{\sc P.~Benner, S.~Dolgov, A.~Onwunta, and M.~Stoll}, {\em Solving optimal
  control problems governed by random {Navier}--{Stokes} equations using
  low-rank methods}, arXiv preprint arXiv:1703.06097,  (2017).

\bibitem{benner2014robust}
{\sc P.~Benner and L.~Feng}, {\em A robust algorithm for parametric model order
  reduction based on implicit moment matching}, in Reduced order methods for
  modeling and computational reduction, Springer, 2014, pp.~159--185.

\bibitem{benner2015survey}
{\sc P.~Benner, S.~Gugercin, and K.~Willcox}, {\em A survey of projection-based
  model reduction methods for parametric dynamical systems}, SIAM review, 57
  (2015), pp.~483--531.

\bibitem{benner2015low}
{\sc P.~Benner, A.~Onwunta, and M.~Stoll}, {\em Low-rank solution of unsteady
  diffusion equations with stochastic coefficients}, SIAM/ASA Journal on
  Uncertainty Quantification, 3 (2015), pp.~622--649.

\bibitem{brunton2016discovering}
{\sc S.~L. Brunton, J.~L. Proctor, and J.~N. Kutz}, {\em Discovering governing
  equations from data by sparse identification of nonlinear dynamical systems},
  Proceedings of the national academy of sciences, 113 (2016), pp.~3932--3937.

\bibitem{bui2008model}
{\sc T.~Bui-Thanh, K.~Willcox, and O.~Ghattas}, {\em Model reduction for
  large-scale systems with high-dimensional parametric input space}, SIAM
  Journal on Scientific Computing, 30 (2008), pp.~3270--3288.

\bibitem{carroll1970analysis}
{\sc J.~D. Carroll and J.-J. Chang}, {\em Analysis of individual differences in
  multidimensional scaling via an n-way generalization of ``{Eckart-Young}''
  decomposition}, Psychometrika, 35 (1970), pp.~283--319.

\bibitem{chaturantabut2010nonlinear}
{\sc S.~Chaturantabut and D.~C. Sorensen}, {\em Nonlinear model reduction via
  discrete empirical interpolation}, SIAM Journal on Scientific Computing, 32
  (2010), pp.~2737--2764.

\bibitem{de2000multilinear}
{\sc L.~De~Lathauwer, B.~De~Moor, and J.~Vandewalle}, {\em A multilinear
  singular value decomposition}, SIAM journal on Matrix Analysis and
  Applications, 21 (2000), pp.~1253--1278.

\bibitem{diez2018algebraic}
{\sc P.~D{\'\i}ez, S.~Zlotnik, A.~Garc{\'\i}a-Gonz{\'a}lez, and A.~Huerta},
  {\em Algebraic pgd for tensor separation and compression: an algorithmic
  approach}, Comptes Rendus M{\'e}canique, 346 (2018), pp.~501--514.

\bibitem{eftang2011hp}
{\sc J.~L. Eftang, D.~J. Knezevic, and A.~T. Patera}, {\em An hp certified
  reduced basis method for parametrized parabolic partial differential
  equations}, Mathematical and Computer Modelling of Dynamical Systems, 17
  (2011), pp.~395--422.

\bibitem{eftang2010hp}
{\sc J.~L. Eftang, A.~T. Patera, and E.~M. R{\o}nquist}, {\em An" hp" certified
  reduced basis method for parametrized elliptic partial differential
  equations}, SIAM Journal on Scientific Computing, 32 (2010), pp.~3170--3200.

\bibitem{gugercin2004survey}
{\sc S.~Gugercin and A.~C. Antoulas}, {\em A survey of model reduction by
  balanced truncation and some new results}, International Journal of Control,
  77 (2004), pp.~748--766.

\bibitem{hackbusch2012tensor}
{\sc W.~Hackbusch}, {\em Tensor spaces and numerical tensor calculus}, vol.~42,
  Springer, 2012.

\bibitem{Hartman}
{\sc P.~Hartman}, {\em Ordinary Differential Equations}, vol.~590, John Wiley
  and Sons, 1964.

\bibitem{hesthaven2016certified}
{\sc J.~S. Hesthaven, G.~Rozza, and B.~Stamm}, {\em Certified reduced basis
  methods for parametrized partial differential equations}, vol.~590, Springer,
  2016.

\bibitem{hitchcock1927expression}
{\sc F.~L. Hitchcock}, {\em The expression of a tensor or a polyadic as a sum
  of products}, Journal of Mathematics and Physics, 6 (1927), pp.~164--189.

\bibitem{kastian2020two}
{\sc S.~Kastian, D.~Moser, L.~Grasedyck, and S.~Reese}, {\em A two-stage
  surrogate model for neo-hookean problems based on adaptive proper orthogonal
  decomposition and hierarchical tensor approximation}, Computer Methods in
  Applied Mechanics and Engineering, 372 (2020), p.~113368.

\bibitem{kerschen2005method}
{\sc G.~Kerschen, J.-c. Golinval, A.~F. Vakakis, and L.~A. Bergman}, {\em The
  method of proper orthogonal decomposition for dynamical characterization and
  order reduction of mechanical systems: an overview}, Nonlinear dynamics, 41
  (2005), pp.~147--169.

\bibitem{kiers2000towards}
{\sc H.~A. Kiers}, {\em Towards a standardized notation and terminology in
  multiway analysis}, Journal of Chemometrics: A Journal of the Chemometrics
  Society, 14 (2000), pp.~105--122.

\bibitem{ReviewTensor}
{\sc T.~G. Kolda and B.~W. Bader}, {\em Tensor decompositions and
  applications}, SIAM review, 51 (2009), pp.~455--500.

\bibitem{kressner2011low}
{\sc D.~Kressner and C.~Tobler}, {\em Low-rank tensor {Krylov} subspace methods
  for parametrized linear systems}, SIAM Journal on Matrix Analysis and
  Applications, 32 (2011), pp.~1288--1316.

\bibitem{lee2019low}
{\sc K.~Lee, H.~C. Elman, and B.~Sousedik}, {\em A low-rank solver for the
  {Navier}--{Stokes} equations with uncertain viscosity}, SIAM/ASA Journal on
  Uncertainty Quantification, 7 (2019), pp.~1275--1300.

\bibitem{liang2002properi}
{\sc Y.~Liang, H.~Lee, S.~Lim, W.~Lin, K.~Lee, and C.~Wu}, {\em Proper
  orthogonal decomposition and its applications—part i: Theory}, Journal of
  Sound and vibration, 252 (2002), pp.~527--544.

\bibitem{liang2002properii}
{\sc Y.~Liang, W.~Lin, H.~Lee, S.~Lim, K.~Lee, and H.~Sun}, {\em Proper
  orthogonal decomposition and its applications--part ii: Model reduction for
  mems dynamical analysis}, Journal of Sound and Vibration, 256 (2002),
  pp.~515--532.

\bibitem{lumley1967structure}
{\sc J.~L. Lumley}, {\em The structure of inhomogeneous turbulent flows},
  Atmospheric turbulence and radio wave propagation,  (1967).

\bibitem{mamonov2022interpolatory}
{\sc A.~V. Mamonov and M.~A. Olshanskii}, {\em Interpolatory tensorial reduced
  order models for parametric dynamical systems}, Computer Methods in Applied
  Mechanics and Engineering, 397 (2022), p.~115122.

\bibitem{nouy2015low}
{\sc A.~Nouy}, {\em Low-rank tensor methods for model order reduction}, arXiv
  preprint arXiv:1511.01555,  (2015).

\bibitem{nouy2017low}
{\sc A.~Nouy}, {\em Low-rank methods for high-dimensional approximation and
  model order reduction}, Model reduction and approximation, P. Benner, A.
  Cohen, M. Ohlberger, and K. Willcox, eds., SIAM, Philadelphia, PA,  (2017),
  pp.~171--226.

\bibitem{TT2}
{\sc I.~Oseledets and E.~Tyrtyshnikov}, {\em {TT}-cross approximation for
  multidimensional arrays}, Linear Algebra and its Applications, 432 (2010),
  pp.~70--88.

\bibitem{TT1}
{\sc I.~V. Oseledets}, {\em Tensor-train decomposition}, SIAM Journal on
  Scientific Computing, 33 (2011), pp.~2295--2317.

\bibitem{rathinam2003new}
{\sc M.~Rathinam and L.~R. Petzold}, {\em A new look at proper orthogonal
  decomposition}, SIAM Journal on Numerical Analysis, 41 (2003),
  pp.~1893--1925.

\bibitem{shen2010numerical}
{\sc J.~Shen and X.~Yang}, {\em Numerical approximations of {Allen--Cahn} and
  {Cahn--Hilliard} equations}, Discrete \& Continuous Dynamical Systems, 28
  (2010), p.~1669.

\bibitem{sirovich1987turbulence}
{\sc L.~Sirovich}, {\em Turbulence and the dynamics of coherent structures. i.
  coherent structures}, Quarterly of applied mathematics, 45 (1987),
  pp.~561--571.

\bibitem{son2013real}
{\sc N.~T. Son}, {\em A real time procedure for affinely dependent parametric
  model order reduction using interpolation on {Grassmann} manifolds},
  International Journal for Numerical Methods in Engineering, 93 (2013),
  pp.~818--833.

\bibitem{tucker1966some}
{\sc L.~R. Tucker}, {\em Some mathematical notes on three-mode factor
  analysis}, Psychometrika, 31 (1966), pp.~279--311.

\end{thebibliography}

\end{document}